\newtheorem{lemma}{Lemma}[section]
\newtheorem{theorem}[lemma]{Theorem}
\newtheorem*{theorem*}{Theorem}
\newtheorem*{problem*}{Problem}
\newtheorem*{convention}{Convention}
\newtheorem*{definition}{Definition}
\newtheorem*{remark}{Remark}
\newtheorem*{remarks}{Remarks}
\newcommand{\C}{{\mathbb C}}
\newcommand{\N}{{\mathbb N}}
\renewcommand{\P}{{\mathbb P}}
\newcommand{\Q}{{\mathbb Q}}
\newcommand{\R}{{\mathbb R}}
\newcommand{\T}{{\mathbb T}}
\newcommand{\Z}{{\mathbb Z}}
\newcommand{\CM}{{\mathcal M}}
\newcommand{\CX}{{\mathcal X}}
\newcommand{\bm}{{\mathbf{m}}}
\newcommand{\bk}{{\mathbf{k}}}
\newcommand{\one}{\mathbf{1}}
\newcommand{\ve}{\varepsilon}
\newcommand{\wh}{\widehat}
\newcommand{\wt}{\widetilde}
\newcommand{\e}{\mathrm{e}}
\newcommand{\tN}{{\widetilde N}}
\newcommand{\inv}{^{-1}}
\newcommand{\st}{{\text{\rm st}}}
\newcommand{\un}{{\text{\rm un}}}
\DeclareMathOperator{\id}{id}
\DeclareMathOperator{\real}{Re}
\renewcommand{\Re}{\real}
\DeclareMathOperator{\Imag}{Im}
\renewcommand{\Im}{\Imag}
\newcommand{\norm}[1]{\lVert #1 \rVert}
\begin{document}

\title{Asymptotics for   multilinear averages of multiplicative functions}

\author{Nikos Frantzikinakis}
\address[Nikos Frantzikinakis]{University of Crete, Department of mathematics, Voutes University Campus, Heraklion 71003, Greece} \email{frantzikinakis@gmail.com}
\author{Bernard Host}
\address[Bernard Host]{
Universit\'e Paris-Est Marne-la-Vall\'ee, Laboratoire d'analyse et
de math\'ematiques appliqu\'ees, UMR CNRS 8050, 5 Bd Descartes,
77454 Marne la Vall\'ee Cedex, France }
\email{bernard.host@u-pem.fr}

\thanks{B. Host was partially supported by Centro de Modelamiento Matem\'atico, Universitad de Chile.}

\begin{abstract}
 A celebrated result of Hal\'asz describes the asymptotic behavior of the arithmetic  mean   of an arbitrary  multiplicative function
 with values on the unit disc. We extend this result to multilinear averages of  multiplicative functions  providing similar asymptotics, thus verifying a two dimensional variant of a conjecture of Elliott.  As a consequence, we get several convergence results for such  multilinear expressions, one of which generalizes a well known convergence result of Wirsing.
 The key ingredients are a recent structural result for  multiplicative functions with values on the unit disc proved by the authors
 and the  mean value theorem of Hal\'asz.
\end{abstract}

\subjclass[2010]{Primary: 11N37; Secondary:  11B30, 11K65. }

\keywords{Multiplicative functions, multilinear averages, correlations, Hal\'asz, Wirsing.}

\maketitle

\section{Background and main results}
The problem of existence of the mean value
\begin{equation}\label{E:MV}
M(f):=\lim_{N\to\infty}\frac{1}{N}\sum_{n=1}^Nf(n)
\end{equation}
of a   multiplicative function $f$ with values on the unit disc
has been an object of intense study in analytic number theory.  P.~Erd\"os and A.~Wintner~\cite{E57} conjectured that if $f$ takes real values and is bounded by $1$, then the mean value $M(f)$ exists.   H.~Delange~\cite{D61} verified this when  $\sum_{p\in \P} \frac{1-f(p)}{p}<\infty$ in which case  $M(f)\neq 0$ unless $f(2^k)=-1$ for every $k\in \N$.
 The proof of the Erd\"os-Wintner conjecture was completed by E.~Wirsing~\cite{Wi67};  building on earlier work of his \cite{Wi61, Wi64}, he  showed that if the previous series diverges, then $M(f)=0$. Note that the prime number theorem corresponds to the very special case of this result where $f$ is the M\"obius function. The previous results were extended to complex valued multiplicative functions   by a  celebrated result of G.~Hal\'asz~\cite{Hal68}.
In order to give  the precise statement we need a definition:
\begin{definition}[Slowly-varying sequences]
We say that $w\colon \N \to \R$ is \emph{slowly-varying} if
$\max_{x\leq n\leq x^2}|w(n)-w(x)|\to 0$ as $x\to \infty$.
\end{definition}

 The next result is due to Hal\'asz~\cite{Hal68}.
We use it in the form given in~\cite[Theorem 6.2]{E79} and \cite[Chapter III.4]{T}.

\begin{theorem*}[Hal\'asz mean value theorem]
\label{T:Halasz}
Let $f$ be a  multiplicative function that takes values on the unit disc. Then there exist  constants $c\in \C$, $t\in \R$, and a slowly-varying sequence
  $w\colon \N\to \R$ such that
$$
  \frac{1}{N}\sum_{n=1}^Nf(n)=c N^{it}\e(w(N))+o_{N\to \infty}(1).
 $$
 If $c\neq 0$, then  $t$ is the unique  real number such that $\sum_{p\in \P}\frac{1}{p}\,\bigl(1-\Re\bigl(f(p) p^{-it}\bigr)\bigr)<\infty$ ($c=0$ if no such number exists),
 in which case we
   can take
$w(N)=\frac 1{2\pi} \sum_{p\in \P, p\leq N}\frac{1}{p}\Im(f(p)p^{-it})$.
 \end{theorem*}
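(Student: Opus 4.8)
This is a classical theorem of Hal\'asz, and the plan is to reconstruct the stated form from the standard ``pretentious distance'' machinery. For unit-disc multiplicative $f,g$ put
\[
\D(f,g;x)^2:=\sum_{p\in\P,\,p\le x}\frac1p\bigl(1-\Re(f(p)\overline{g(p)})\bigr),
\]
so the exponent in the statement is the $t$ with $\D(f,n^{it};\infty)<\infty$. The ingredients I would use are: (i) \emph{Hal\'asz's inequality} $\frac1N\bigl|\sum_{n\le N}f(n)\bigr|\ll(1+M_N)\,\e^{-M_N}+(\log N)^{-1/2}$, with $M_N:=\min_{|t|\le\log N}\D(f,n^{it};N)^2$, obtained from the Hal\'asz--Montgomery large-values estimate for $\sum f(n)n^{-s}$ on $\Re s=1$ together with a Parseval identity (as in \cite[Ch.~III.4]{T}); (ii) the triangle inequality $\D(f_1g_1,f_2g_2;x)\le\D(f_1,f_2;x)+\D(g_1,g_2;x)$; and (iii) the estimate $\D(1,n^{i\tau};x)^2=\sum_{p\le x}\frac{1-\cos(\tau\log p)}p=\log\log x+O(1)$, uniformly for $1\le|\tau|\le(\log x)^{10}$, say, and $\to\infty$ for each fixed $\tau\ne0$, which follows by bounding $\sum_{p\le x}p^{-1+i\tau}$ via the prime number theorem.

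\emph{Uniqueness of $t$ and the dichotomy.} By (ii), $\D(1,n^{i(t_1-t_2)};x)\le\D(f,n^{it_1};x)+\D(f,n^{it_2};x)$, so two critical exponents $t_1,t_2$ would keep the left side bounded, forcing $t_1=t_2$ by (iii). Hence exactly one of two situations occurs: a unique critical $t$ exists, or $\D(f,n^{i\tau};x)\to\infty$ for every fixed $\tau\in\R$; the theorem is proved by treating these two cases separately.

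\textbf{The case $c=0$.} Assume no critical exponent exists. I would first upgrade ``$\D(f,n^{i\tau};x)^2\to\infty$ for each fixed $\tau$'' to $M_N\to\infty$, i.e.\ uniformity over the expanding window $|t|\le\log N$: by (ii), $t\mapsto\D(f,n^{it};N)^2$ cannot be bounded at two points more than $O(1)$ apart without $\D(1,n^{i\theta};N)^2$ being bounded for some $\theta$ with $1\le|\theta|\le2\log N$, contradicting (iii); and a compactness argument (using that $N\mapsto\D(f,n^{it};N)^2$ is nondecreasing and continuous in $t$) handles the remaining fixed range of $t$. Then Hal\'asz's inequality gives $\frac1N\sum_{n\le N}f(n)=o(1)$, so the conclusion holds with $c=0$ and $w\equiv0$. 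Supplying (i) --- the Hal\'asz--Montgomery argument --- is the technical heart of the theorem and the step I expect to be the main obstacle.

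\textbf{The case $c\ne0$.} Let $t$ be the critical exponent and $g(n):=f(n)n^{-it}$, so $g$ is unit-disc multiplicative with $\sum_p p^{-1}(1-\Re g(p))<\infty$. From $(\Im g(p))^2\le(1-\Re g(p))(1+\Re g(p))\le2(1-\Re g(p))$ we get $\sum_p p^{-1}(\Im g(p))^2<\infty$, hence $w(N):=\frac1{2\pi}\sum_{p\le N}p^{-1}\Im g(p)=\frac1{2\pi}\sum_{p\le N}p^{-1}\Im(f(p)p^{-it})$ is slowly-varying, since Cauchy--Schwarz bounds its increment on $[x,x^2]$ by $\bigl(\sum_{p>x}p^{-1}(\Im g(p))^2\bigr)^{1/2}(\log2+o(1))^{1/2}=o(1)$. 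I would then invoke a Delange--Wirsing-type mean value theorem: for such $g$ the limit $c':=\lim_{x\to\infty}\frac1x\sum_{n\le x}g(n)\,\e(-w(x))$ exists --- when $g$ is real this is precisely the Erd\H os--Wintner/Wirsing theorem, with $w\equiv0$, and in general it is read off from the behaviour of $\sum g(n)n^{-s}$ near $s=1$ after removing the drift $\e(w)$ --- so $G(x):=\sum_{n\le x}g(n)=c'x\,\e(w(x))+o(x)$. Finally, writing $\sum_{n\le N}f(n)=\sum_{n\le N}g(n)n^{it}$ and applying Abel summation, using that $\e(w(x))$ is essentially constant on $[\sqrt N,N]$ by slow variation, yields $\sum_{n\le N}f(n)=\frac{c'}{1+it}N^{1+it}\e(w(N))+o(N)$; thus the asserted asymptotic holds with $c=c'/(1+it)$ (and trivially with $c=0$, $w\equiv0$, should $c'=0$). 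When $c\ne0$, the uniqueness of $t$ in any such representation follows from the uniqueness of the critical exponent together with the observation that $N^{i(t-t')}\e(w(N)-w'(N))\to1$ is impossible for $t\ne t'$ and slowly-varying $w,w'$ (compare the values at $N$, $2N$ and $3N$).
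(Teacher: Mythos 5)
The paper does not prove this theorem: it states it as background (``due to Hal\'asz'') and cites the exact form to Elliott \cite[Theorem 6.2]{E79} and Tenenbaum \cite[Chapter~III.4]{T}, so there is no in-paper argument to compare against. Your reconstruction is therefore necessarily a reconstruction of the classical proof rather than of anything in the paper, and as such it follows the modern ``pretentious'' route correctly at the level of an outline: the dichotomy via the pretentious triangle inequality, the uniform divergence $M_N\to\infty$ leading to decay via Hal\'asz's inequality in the non-pretentious case, and the reduction to $g=f(n)n^{-it}$ plus a Delange-type mean value theorem with the slowly-varying twist $\e(w(x))$ in the pretentious case, followed by Abel summation to reinstate $n^{it}$. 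Your derivations of $\sum_p p^{-1}(\Im g(p))^2<\infty$ from the one-sided condition, and of the slow variation of $w$ via Cauchy--Schwarz and Mertens, are correct, and the Abel summation bookkeeping giving the $1/(1+it)$ factor checks out.

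Two remarks on accuracy and completeness. First, the claim $\D(1,n^{i\tau};x)^2=\log\log x+O(1)$ uniformly for $1\le|\tau|\le(\log x)^{10}$ is an overstatement: with the classical zero-free region one only gets $\log\log x+O(\log\log\log x)$ (the error reflects $\log|\zeta(1+1/\log x+i\tau)|$), but this is harmless since all you need is a lower bound tending to infinity uniformly in that range. Second, and more importantly, the two real engines of the theorem --- Hal\'asz's inequality (your step (i)) and the twisted Delange--Wirsing mean value theorem for pretentious $g$ --- are invoked without proof, and you flag the first one yourself as the main obstacle. In the present context this is fine, since the paper likewise treats the whole theorem as a black box from \cite{E79} and \cite{T}; but a self-contained proof of the statement would have to supply those two ingredients, which is where essentially all of the analytic work in Hal\'asz's theorem lives.
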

 \begin{remarks}
$\bullet$   Explicit quantitative bounds exist in the case $c=0$; see for example~\cite[Chapter III.4, Corollary 6.3]{T}.

$\bullet$ If  $f(n)=n^{it}$ for some $t\neq 0$, then  $ \frac{1}{N}\sum_{n=1}^Nf(n)=\frac{ N^{it}}{1+it}+o_{N\to \infty}(1)$; hence $M(f)$ does not exist. Lending terminology from \cite{GS15}, the theorem of Hal\'asz implies that the mean value $M(f)$ exists and is $0$ unless $f(n)$ ``pretends'' to be $n^{it}$ for some $t\in \R$.
 \end{remarks}

Our main goal is to extend the convergence result of Wirsing and the asymptotic formula of Hal\'asz to multilinear averages of multiplicative functions.
 Although the one dimensional multilinear averages
$\frac{1}{N}\sum_{n=1}^N  \prod_{j=1}^\ell f_j(k_jn+ a_j),$ where $k_j,a_j\in \N$ and $f_j$ are  multiplicative functions with modulus at most $1$,
are notoriously hard to analyze,
%%and linked to problems such as the Goldbach and the twin prime conjecture,
a conjecture of P.~Elliott~\cite[Conjecture~I]{El90,El94}
predicts that they satisfy asymptotics similar to those in Hal\'asz's theorem. Our main results verify these asymptotics when the one dimensional affine-linear forms $k_jn+ a_j$, $j=1,\ldots, \ell$,  are replaced with higher dimensional linear forms, or affine-linear forms with
 pairwise independent linear parts.
 The setup is as follows: We are given complex valued
multiplicative functions $f_1,\ldots, f_\ell$ and linear forms $L_1,\ldots, L_\ell\colon \N^d\to \N$ given  by
$$
L_j(\bm)=\bk_j\cdot \bm\text{ for some }\bk_j\in \N^d\ .
$$
\begin{convention}
Henceforth, we  assume that all multiplicative functions  take values on the  interval $[-1,1]$ or the  unit disc  depending on whether they are real or complex valued. Furthermore, with $[N]$ we denote the set $\{1,\ldots, N\}$.
\end{convention}
We are interested in studying the asymptotic behavior of
the averages
\begin{equation}\label{E:multi1}
\frac{1}{N^d}\sum_{\bm \in [N]^d} \prod_{j=1}^\ell f_j(L_j(\bm)).
 \end{equation}
We remark that for the purposes of this article the special case  where $d=2$  and
$L_j(m,n)=m+(j-1)n$, $j=1,\ldots,\ell$, is essentially as hard as the general case.
If the linear forms are pairwise (linearly) independent, that is, no two are rational multiples of each  other,  and one of the multiplicative functions is the M\"obius or the Liouville function, then
 B.~Green and  T.~Tao showed in \cite[Proposition~9.1]{GT10}, modulo conjectures which  were subsequently verified in \cite{GT12a, GT12b, GTZ12c}, that the averages \eqref{E:multi1} converge to $0$.

 Our first result is the following:
\begin{theorem}[Asymptotic form of multilinear averages]\label{T:Main}
 Let $d\in \N$, $f_1,\ldots, f_\ell$  be  complex valued  multiplicative functions with modulus at most $1$, and  $L_1,\ldots, L_\ell\colon \N^d\to \N$ be linear forms. Then there exist  $c\in \C$,  $t\in \R$, and a slowly-varying sequence $w\colon \N\to \R$, such that
\begin{equation}\label{E:multi2}
\frac{1}{N^d}\sum_{\bm \in [N]^d} \prod_{j=1}^\ell f_j(L_j(\bm))=cN^{it}\e(w(N))+o_{N\to\infty}(1).
\end{equation}
If in addition we assume that the linear forms are pairwise independent\footnote{We can always reduce to this case
after putting together multiplicative functions evaluated at linear forms that are pairwise dependent.}, then  $t=\sum_{j=1}^\ell t_{f_j}$ and we can take $w=\sum_{j=1}^\ell w_{f_j}$,
where for $j=1,\ldots, \ell$,
 $t_{f_j}$ are the real numbers and $w_{f_j}$ are the slowly-varying sequences defined in Theorem~\ref{T:HalaszAP} below. Furthermore, the
 constant  $c$ in \eqref{E:multi2}
is $0$ unless all $f_j$ are ``pretentious'', meaning, for $j=1,\ldots, \ell$
there exist $t_j\in \R$ and
   Dirichlet characters $\chi_j$ such that  $\sum_{p\in \P}\frac{1}{p}\,\bigl(1-\Re\bigl(\chi_j(p)f_j(p) p^{-it_j}\bigr)\bigr)<\infty$.
 \end{theorem}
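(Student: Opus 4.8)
\emph{Strategy and reduction.} The plan is to combine the structure theorem for multiplicative functions due to the two authors with Halász's theorem: the former lets one replace each $f_j$ on $[N]$ by a ``structured'' approximant, up to a sum of a Gowers-uniform error and an $L^1$-small error that both drop out of the multilinear average, while the latter (together with its version along arithmetic progressions, Theorem~\ref{T:HalaszAP}) both identifies the structured approximant and evaluates the main term. First I would carry out the reduction indicated in the footnote: grouping $L_1,\dots,L_\ell$ into classes of proportionality, within a class one has $L_j=a_jL$ with $L$ primitive and $a_j\in\N$, and factoring each value $L(\bm)=b\cdot m$ with $b=\prod_{p\mid a}p^{v_p(L(\bm))}$ ($a:=\prod_ja_j$) and $\gcd(m,a)=1$ gives $\prod_jf_j(a_jL(\bm))=c_b\,h(m)$, where $c_b$ depends only on $b$ and $h:=\prod_jf_j$ is, on integers coprime to $a$, a genuine multiplicative function, the same for every $b$. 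Truncating the sum over $b$ at $b\le T$ costs $o_T(1)$ on average over $\bm$, so \eqref{E:multi1} reduces to a bounded combination of averages over the pairwise independent primitive forms of dilates of multiplicative functions weighted by periodic functions of $\bm$, with each original class contributing a single multiplicative function and hence a single relevant Halász exponent. From here on I would assume the forms pairwise independent and, for brevity, the $f_j$ multiplicative; the dilations and periodic weights are harmless throughout, a dilation merely rescaling $N$ and a periodic weight being a bounded combination of linear phases.

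\emph{The structured decomposition.} If some $f_j$ is non-pretentious it is aperiodic, hence $\norm{f_j}_{U^s[N]}=o_{N\to\infty}(1)$ by the structure theorem, for every fixed $s$; choosing $s$ larger than the Cauchy--Schwarz complexity of $\{L_1,\dots,L_\ell\}$, the generalized von Neumann inequality forces \eqref{E:multi1} to be $o_{N\to\infty}(1)$, so \eqref{E:multi2} holds with $c=0$. So assume every $f_j$ is pretentious. For a fixed $\ve>0$ and such an $s$, the structure theorem gives, on $[N]$ for $N$ in the relevant range, a splitting $f_j=f_{j,\st}+f_{j,\un}+f_{j,e}$ with $\norm{f_{j,\un}}_{U^s[N]}\le\ve$, $\norm{f_{j,e}}_{L^1[N]}\le\ve$, and --- this being the point where the multiplicativity of $f_j$ is used --- the structured part in the rigid form $f_{j,\st}(n)=\psi_j(n)\,n^{it_{f_j}}\,\e(w_{f_j}(n))+O(\ve)$ on $[N]$, with $\psi_j$ periodic (encoding the Dirichlet character $\chi_j$) and $t_{f_j},w_{f_j}$ the Halász data of Theorem~\ref{T:HalaszAP}. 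Expanding $\prod_jf_j(L_j(\bm))$ into $3^\ell$ products, every product other than $\prod_jf_{j,\st}(L_j(\bm))$ carries a factor $f_{j,\un}(L_j(\bm))$ or $f_{j,e}(L_j(\bm))$ for some $j$; the first type contributes $O(\ve)$ by generalized von Neumann, the second $O(\ve)$ because a linear form assumes each value with multiplicity $O(N^{d-1})$ on $[N]^d$, whence $\frac1{N^d}\sum_{\bm\in[N]^d}|f_{j,e}(L_j(\bm))|\ll\norm{f_{j,e}}_{L^1[CN]}$. Thus \eqref{E:multi1} equals $\frac1{N^d}\sum_{\bm\in[N]^d}\prod_jf_{j,\st}(L_j(\bm))+O(\ve)$.

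\emph{The main term.} Writing $n^{it_{f_j}}=N^{it_{f_j}}(n/N)^{it_{f_j}}$ and using that $w_{f_j}$ is slowly varying, so that $\e(w_{f_j}(L_j(\bm)))=\e(w_{f_j}(N))+o_{N\to\infty}(1)$ for $\bm$ outside a subset of $[N]^d$ of negligible density, one factors out $N^{it}\,\e(w(N))$ with $t=\sum_jt_{f_j}$ and $w=\sum_jw_{f_j}$, leaving $\frac1{N^d}\sum_{\bm\in[N]^d}\bigl(\prod_j\psi_j(L_j(\bm))\bigr)\Phi(\bm/N)+o_{N\to\infty}(1)$, where $\Phi(\bx):=\prod_j(\bk_j\cdot\bx)^{it_{f_j}}$ is bounded and Riemann integrable on $[0,1]^d$. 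Since the first factor depends on $\bm$ only through its residue modulo a fixed $Q$ and the second only through $\bm/N$, the average factors and tends, as $N\to\infty$, to
\[
c_0:=\Bigl(\frac1{Q^d}\sum_{\mathbf r\in(\Z/Q\Z)^d}\prod_j\psi_j(L_j(\mathbf r))\Bigr)\Bigl(\int_{[0,1]^d}\prod_j(\bk_j\cdot\bx)^{it_{f_j}}\,d\bx\Bigr).
\]
Taking $c=c_0$ and letting $\ve\to0$ yields \eqref{E:multi2} with the asserted formulas for $t$ and $w$, and with $c=0$ unless all $f_j$ are pretentious, as already seen in the non-pretentious case.

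\emph{Main obstacle.} The hard part will be justifying the rigid form of $f_{j,\st}$ used above: in its bare form the structure theorem only produces an approximate bounded-degree polynomial phase (or low-complexity nilsequence), whereas here it must be $\psi_j(n)\,n^{it_{f_j}}\,\e(w_{f_j}(n))$ with precisely the Halász exponent and slowly-varying phase. This is exactly where multiplicativity and Halász's theorem must be fused: $f_{j,\st}$ carries all of the non-negligible correlation of $f_j$ with low-complexity phases, and the Daboussi--Delange--Halász orthogonality of multiplicative functions to genuinely higher-step phases, combined with the description of $f_j$ along arithmetic progressions in Theorem~\ref{T:HalaszAP}, forces $f_{j,\st}$ into the one-step pretentious form with the stated data. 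Making this matching precise, uniformly over the range of $N$ on which the structure theorem operates, is the technical heart of the argument.
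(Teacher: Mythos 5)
Your overall blueprint — structure theorem to isolate a structured component, generalized von Neumann / Gowers-uniformity to discard the uniform error, Hal\'asz--Delange to evaluate the main term — is the same as the paper's. But the step you flag yourself as the ``main obstacle'' is in fact a genuine gap, and the paper does not fill it in the way you propose.

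The issue is your assertion that the structure theorem gives the pointwise ``rigid form'' $f_{j,\st}(n)=\psi_j(n)\,n^{it_{f_j}}\e(w_{f_j}(n))+O(\ve)$ on $[N]$. Theorem~\ref{T:Structure} produces $f_{j,\st}=f_{j,N}*\phi_{N,\theta}$, a convolution of $f_j\one_{[N]}$ with a Fej\'er-type kernel whose spectrum is a union of short arcs around the points $pN/Q$; it says nothing about the structured part having a one-step pretentious shape with the precise Hal\'asz data $t_{f_j},w_{f_j}$, and deriving such a pointwise description from the Fourier-support information is not a small matter. What one actually gets, after applying Lemma~\ref{L:Fourier'} to each Fourier coefficient $\widehat{f_{j,N}}(\xi)$ with $\xi$ in the spectrum, is $f_{j,\st}(n)\approx N^{it_{f_j}}\e(w_{f_j}(N))\,G_j(n\bmod Q,\,n/N)$ where $G_j$ is a trigonometric polynomial in its second variable determined by the kernel and by the constants $c_{f,p,Q,\xi'}$ from Lemma~\ref{L:Fourier'}; nothing forces $G_j(r,y)$ to factor as $\psi_j(r)\,y^{it_{f_j}}$. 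So your closed-form evaluation of the limit $c_0$ as a product of a periodic average and $\int_{[0,1]^d}\prod_j(\bk_j\cdot\bx)^{it_{f_j}}d\bx$ is unjustified.

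The paper's route avoids needing any pointwise description of $f_{j,\st}$: Lemma~\ref{L:Structured'} expands $\prod_j f_{j,\st}(L_j(\bm))$ as a finite sum over tuples of major-arc frequencies, evaluates each Fourier coefficient asymptotically via Lemma~\ref{L:Fourier'} (which rests on Lemma~\ref{L:Partial'}, a partial-summation argument, and Theorem~\ref{T:HalaszAP}), and shows each summand converges after dividing by $N^{it}\e(w(N))$. Combined with Lemma~\ref{L:seminorms} this gives the asymptotic \eqref{E:multi2} directly, with no explicit formula for $c$ and no rigid form for the structured part. Two further things in your proposal differ from the paper and would need care: the structure theorem there is a two-term decomposition and requires the modulus to be prime (the paper works in $\Z_{\tN}$ for a suitable prime $\tN\approx\kappa N$ with $\tN\equiv1\bmod Q$, and this congruence is used in Lemma~\ref{L:Structured'}); and the vanishing of $c$ when some $f_j$ is non-pretentious is in the paper handled by invoking \cite[Theorem~9.7]{FH14} rather than by the bare $U^s$-decay argument you sketch, which would need to be upgraded to the prime-modulus setting. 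Your reduction to pairwise independent forms by grouping proportional ones, writing $L_j=a_jL$, $L(\bm)=bm$ with $\gcd(m,a)=1$ and $\prod_jf_j(a_jL(\bm))=c_b\,h(m)$, is a correct fleshing-out of the paper's footnote.
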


\begin{remarks}
$\bullet$ In a recent preprint   K.~Matom\"aki, M.~Radziwi{\l}{\l},   T.~Tao~\cite{MRT15}, using techniques from \cite{MR15},  prove that the   averages   $\frac{1}{M^\ell}\sum_{m_1,\ldots, m_\ell\in [M]}|\frac{1}{N}\sum_{n\in [N]}\prod_{j=1}^\ell f_j(n+m_j)|$
converge to $0$ if  $M=M(N)$ increases  to infinity with $N$ at an arbitrary speed and at least one of the multiplicative functions is ``non-pretentious'' in a certain uniform sense. Such  results are complementary to ours and
rely on very  different techniques.\footnote{In \cite{MRT15} the authors treat  averages taken over a short interval $[M]$ and a long interval $[N]$; key to their analysis is a   correlation estimate over short intervals between   multiplicative functions and linear complex exponential sequences.
In  \cite{FH14} and in this article, the main difficulty is different,  we take $M=N$ but consider averages
taken over   arbitrary  subspaces of $\Z^\ell$ which are given in parametric form;  at the heart of this analysis   lies a  correlation estimate of  multiplicative functions with nilsequences.}

$\bullet$
   If the linear forms are pairwise independent, then Theorem~\ref{T:Main} and all subsequent results remain valid with  the affine-linear forms  $L_j(\bm)+a_j$, $a_j\in \Z$,   in place of  the linear forms $L_j(\bm)$ for $j=1,\ldots, \ell$.
   Interestingly, this is no longer true if two of the linear forms are dependent, even if the affine-linear forms are independent;  an example
    from \cite[Appendix~A]{MRT15}
%%  K.~Matom\"aki, M.~Radziwi{\l}{\l}, and  T.~Tao
  shows that there exists a non-pretentious multiplicative function $f$ such that the averages  $ \frac{1}{N}\sum_{n=1}^Nf(n)\overline{f(n+1)}$ do not converge to zero.
\end{remarks}

For  $\ell= 3$,  the asymptotic formula \eqref{E:multi2} can be proved by combining tools from discrete Fourier analysis, a result of H.~Daboussi~\cite{D74, DD74} that provides estimates  for the Fourier coefficients of a multiplicative function, and the previously mentioned asymptotic formula of  Hal\'asz. For $\ell\geq 4$
 classical discrete Fourier analysis tools turn out to be insufficient for the task at hand, the reason being that for general bounded sequences the modulus of the averages \eqref{E:multi1} is not controlled by the maximum modulus of the Fourier coefficients of the individual functions (thought of as functions of $\Z_N$). To overcome this obstacle we use a deep structural result from \cite{FH14} (see Theorem~\ref{T:Structure}) which was proved using the toolbox of ``higher order Fourier analysis''. For our particular needs it implies that the
 general  multiplicative function with modulus at most $1$ can be decomposed in two terms, one that is approximately periodic and another that contributes negligibly to the   averages \eqref{E:multi1}.
 The contribution  of the structured component is then analyzed  using  an extension of Hal\'asz's asymptotic formula (see Theorem~\ref{T:HalaszAP}) and after some effort the outcome is the asymptotic  formula \eqref{E:multi2}. The details are given in Section~\ref{S:proofs}.

    Using Theorem~\ref{T:Main} we deduce the following generalization of the convergence result of E.~Wirsing which deals with  multilinear averages of real valued  multiplicative functions:
\begin{theorem}[Wirsing's theorem for multilinear averages] \label{T:1}
Let $f_1,\ldots, f_\ell$ be real valued  multiplicative functions with modulus at most $1$
 and $L_1,\ldots, L_\ell\colon \N^d\to \N$ be linear forms. Then the averages  \eqref{E:multi1}
converge as $N \to \infty$.
 \end{theorem}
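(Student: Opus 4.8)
The plan is to derive Theorem~\ref{T:1} as a direct corollary of Theorem~\ref{T:Main}. By that theorem we already know the averages \eqref{E:multi1} equal $cN^{it}\e(w(N)) + o_{N\to\infty}(1)$ for some $c\in\C$, $t\in\R$, and a slowly-varying sequence $w$. So the whole task reduces to showing that, when all $f_j$ are \emph{real} valued, either $c = 0$, or else $t = 0$ and $\e(w(N))$ converges. In either case $cN^{it}\e(w(N))$ converges (to $0$ in the first case), and then so do the averages \eqref{E:multi1}.

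First I would reduce to pairwise independent linear forms: grouping together the functions attached to linear forms that are rational multiples of one another produces a new product $\prod_i g_i(L'_i(\bm))$ over pairwise independent forms $L'_i$, where each $g_i$ is a product of (dilates of) the original $f_j$'s and hence is again a real valued multiplicative function with modulus at most $1$. So assume the $L_j$ are pairwise independent. Now invoke the pairwise-independent clause of Theorem~\ref{T:Main}: $t = \sum_{j=1}^\ell t_{f_j}$, $w = \sum_{j=1}^\ell w_{f_j}$, and $c = 0$ unless every $f_j$ is pretentious in the sense described there. If some $f_j$ is not pretentious, then $c = 0$ and we are done. So the remaining case is that every $f_j$ is pretentious; I must then show $t_{f_j} = 0$ and that $\e(w_{f_j}(N))$ converges for each $j$.

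The key input here is that a real valued multiplicative function cannot pretend to be $n^{it}$ for a nonzero $t$, nor can it pretend to be a genuinely complex character. Concretely: if $f_j$ is real valued and $\sum_p \frac1p(1 - \Re(\chi_j(p) f_j(p) p^{-it_j})) < \infty$, then, replacing $f_j$ by $\overline{f_j} = f_j$ in the sum and averaging the two expressions, we get $\sum_p \frac1p(1 - \Re(f_j(p)) \Re(\chi_j(p) p^{-it_j})) < \infty$ together with the symmetric statement for the imaginary parts; a standard argument (as in the proof that the Erd\H{o}s--Wintner conjecture reduces to Wirsing's theorem, or Lemma~III.4 type estimates in \cite{T}) forces $\sum_p \frac1p(1 - \Re(\chi_j(p)^2 p^{-2it_j})) < \infty$, which by the prime number theorem for characters is impossible unless $t_j = 0$ and $\chi_j^2$ is principal, i.e.\ $\chi_j$ is a real (quadratic or principal) character. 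Once we know $t_{f_j}$ (the unique exponent singled out in Theorem~\ref{T:HalaszAP}) equals $0$ and the associated character is real, the slowly-varying sequence $w_{f_j}(N) = \frac{1}{2\pi}\sum_{p\le N, \ p\in\P} \frac1p \Im(\chi_j(p) f_j(p))$ (or the analogue from Theorem~\ref{T:HalaszAP}) has $\Im(\chi_j(p) f_j(p))$ real-character-times-real-valued, hence the partial sums telescope to a convergent (indeed eventually constant modulo the relevant scaling) quantity; more precisely $\e(w_{f_j}(N))$ converges. Summing over $j$ gives $t = 0$ and $\e(w(N))$ convergent, hence $cN^{it}\e(w(N))$ converges, completing the proof.

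The main obstacle I expect is the second part of the previous paragraph: pinning down exactly what Theorem~\ref{T:HalaszAP} (not reproduced in this excerpt) asserts about $t_{f_j}$ and $w_{f_j}$ for a pretentious function, and then carrying out the elementary-but-delicate number-theoretic argument that ``real valued $+$ pretentious'' forces $t_{f_j} = 0$ and a real associated character, from which the convergence of $\e(w_{f_j}(N))$ follows. Everything else is bookkeeping: the reduction to independent forms and the assembly $t = \sum t_{f_j}$, $w = \sum w_{f_j}$ are immediate from Theorem~\ref{T:Main}, and the passage from ``$cN^{it}\e(w(N))$ converges'' to ``the averages \eqref{E:multi1} converge'' is trivial given the $o_{N\to\infty}(1)$ error term.
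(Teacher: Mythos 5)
Your overall strategy---reducing to Theorem~\ref{T:Main}, passing to pairwise independent linear forms, and then arguing that for real valued $f_j$ the exponent $t_{f_j}$ vanishes and the phase $\e(w_{f_j}(N))$ converges---is correct and is in spirit the same as the paper's. But the paper gets the key fact much more cheaply, via the remark following Theorem~\ref{T:HalaszAP}: the defining uniqueness of $t_f$, $\chi_f$, $w_f$ is conjugation-equivariant, so $t_{\bar f}=-t_f$ and $w_{\bar f}=-w_f$ for every $f\in\CM_\C$; for real $f$, $\bar f=f$ forces $t_f=0$ and $w_f\equiv 0$. Consequently $N^{it}\e(w(N))\equiv 1$ and there is nothing left to check. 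Your version replaces this one-line symmetry argument by a heavier computation: you first split into the cases ``some $f_j$ non-pretentious'' (where $c=0$) and ``all $f_j$ pretentious,'' a split that is unnecessary because Theorem~\ref{T:HalaszAP} already sets $t_f=w_f=0$ by convention in the non-pretentious case; and in the pretentious case you run a pretentious-triangle-inequality argument, deducing $\sum_p p^{-1}\bigl(1-\Re(\chi_j(p)^2 p^{-2it_j})\bigr)<\infty$ and invoking the prime number theorem for characters to conclude $t_j=0$ and $\chi_j$ real. That argument can be made rigorous (the elementary inequality $1-b^2\le 2(1-ab)$ for $a,b\in[-1,1]$ bridges the step where $|f_j|\le1$ is not unimodular), and once $t_j=0$ and $\chi_j$ is real you in fact get $\Im(\chi_j(p)f_j(p))\equiv 0$, so $w_{f_j}\equiv 0$ rather than merely ``$\e(w_{f_j}(N))$ converges.'' Two small slips worth noting: the $\tfrac{1}{2\pi}$ normalization you wrote for $w_{f_j}$ belongs to the one-variable Hal\'asz theorem, not to Theorem~\ref{T:HalaszAP}; and the phrase ``partial sums telescope to a convergent quantity'' obscures the cleaner conclusion that the sequence is identically zero.
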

 \begin{remark}
If the linear forms are pairwise independent, then  the result remains true if   we  replace the assumption that the $f_j$'s take  real values with the assumption that the
mean values $\lim_{N\to \infty} \frac{1}{N}\sum_{n=1}^N f_j(n)\chi(n)$ exist
 for every Dirichlet character $\chi$.\footnote{
 Whether this holds can be  verified using the following consequence of the mean value theorem of Hal\'asz
   (see~\cite[Theorem~6.3]{E79}): The mean value $\lim_{N\to \infty} \frac{1}{N}\sum_{n=1}^N f(n)$ exists if and only if  either $(i)$ $\sum_{p\in \P}  p^{-1}(1-\Re(f(p)p^{-it}))=\infty$ for every $t\in \R$, or $(ii)$ $\sum_{p\in \P}p^{-1}(1-f(p))$ converges, or $(iii)$ for some $t\in \R$ we have  $\sum_{p\in \P}  p^{-1}(1-\Re(f(p)p^{-it}))<\infty$  and $f(2^k)=-2^{ikt}$ for all $k\in \N$.}
 \end{remark}

It follows from the asymptotic formula given in Hal\'asz's theorem that if  $f$ is a
complex valued  multiplicative function with modulus at most $1$, then the
averages $\Big|\frac{1}{N}\sum_{n=1}^Nf(n)\Big|$ converge as $N\to \infty$. A similar result extends to multilinear averages:
\begin{theorem}[Convergence of the modulus]
\label{T:2}
 Let $f_1,\ldots, f_\ell$
  be complex valued  multiplicative functions with modulus at most $1$,
 and $L_1,\ldots, L_\ell\colon \N^d\to \N$ be linear forms. Then the modulus of the  averages \eqref{E:multi1}
converges as $N \to \infty$.
 \end{theorem}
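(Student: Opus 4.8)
The plan is to obtain this immediately from the asymptotic formula of Theorem~\ref{T:Main}: passing to the modulus annihilates the two oscillating factors $N^{it}$ and $\e(w(N))$ and leaves a constant limit. First I would apply Theorem~\ref{T:Main} to the given functions $f_1,\ldots,f_\ell$ and linear forms $L_1,\ldots,L_\ell$ --- note that the formula \eqref{E:multi2} is asserted there for arbitrary linear forms, with no pairwise independence hypothesis, so there is nothing to reduce --- and thereby get $c\in\C$, $t\in\R$, and a slowly-varying sequence $w\colon\N\to\R$ with
$$
\frac{1}{N^d}\sum_{\bm\in[N]^d}\prod_{j=1}^\ell f_j(L_j(\bm))=cN^{it}\e(w(N))+o_{N\to\infty}(1).
$$

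Next I would record the elementary observation that this main term has $N$-independent modulus: since $t\in\R$ we have $|N^{it}|=1$, and since $\e$ denotes the function $x\mapsto \e^{2\pi i x}$, which takes values on the unit circle, and $w(N)\in\R$, we have $|\e(w(N))|=1$; hence $\bigl|cN^{it}\e(w(N))\bigr|=|c|$ for every $N\in\N$. The conclusion then follows from the reverse triangle inequality:
$$
\Bigl|\;\Bigl|\tfrac{1}{N^d}\sum_{\bm\in[N]^d}\prod_{j=1}^\ell f_j(L_j(\bm))\Bigr|-|c|\;\Bigr|\le\Bigl|\tfrac{1}{N^d}\sum_{\bm\in[N]^d}\prod_{j=1}^\ell f_j(L_j(\bm))-cN^{it}\e(w(N))\Bigr|=o_{N\to\infty}(1),
$$
so the modulus of the averages \eqref{E:multi1} converges, the limit being $|c|$.

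I do not anticipate any genuine difficulty here: the whole weight of the argument --- the structural decomposition from \cite{FH14} and the Hal\'asz-type asymptotics --- is already carried by Theorem~\ref{T:Main}, and the passage to the modulus uses only that $|N^{it}\e(w(N))|=1$. The single point to keep in mind is that the slowly-varying sequence $w$ may be unbounded, so one should not try to argue through a limit of $w(N)$, or of $N^{it}$, separately; this is harmless, since only the modulus $|\e(w(N))|=1$ enters the estimate, regardless of how large $w(N)$ becomes.
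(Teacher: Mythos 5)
Your proof is correct and matches the paper's own argument exactly: both apply Theorem~\ref{T:Main} and then use that $|N^{it}\e(w(N))|=1$ so the main term has constant modulus $|c|$. The paper states this in one line; your write-up simply spells out the reverse triangle inequality that the paper leaves implicit.
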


 Interestingly, although  for  complex valued  multiplicative functions  with modulus at most $1$ the multilinear averages \eqref{E:multi1} do not in general converge, we do have convergence if in \eqref{E:multi1} each multiplicative function is paired up with its complex conjugate.
\begin{theorem}
\label{T:3}
 Let $f_1,\ldots, f_\ell$ be  complex valued  multiplicative functions with modulus at most $1$ and
 $L_j,L_j' \colon\N^d\to \N$, $j=1,\ldots, \ell$,   be pairwise independent linear forms.
  Then   the averages
$$
\frac{1}{N^d}\sum_{\bm\in [N]^d} \prod_{j=1}^\ell f_j(L_j(\bm))\cdot \overline{f_j}(L'_j(\bm))
$$
converge as $N \to \infty$.
 \end{theorem}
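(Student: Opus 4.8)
The plan is to reduce to Theorem~\ref{T:Main} by a standard "doubling" trick that turns the pairing of each $f_j$ with $\overline{f_j}$ into an ordinary multilinear average of a larger collection of multiplicative functions evaluated at a larger system of pairwise independent linear forms. Concretely, set $g_{2j-1}=f_j$ and $g_{2j}=\overline{f_j}$ for $j=1,\ldots,\ell$, and regard the $2\ell$ linear forms $L_1,L_1',L_2,L_2',\ldots,L_\ell,L_\ell'$ as a single family. Since $\overline{f_j}$ is again a multiplicative function with modulus at most $1$, and since by hypothesis all $2\ell$ forms are pairwise independent, Theorem~\ref{T:Main} applies and yields $c\in\C$, $t\in\R$, and a slowly-varying $w$ with
\begin{equation*}
\frac{1}{N^d}\sum_{\bm\in[N]^d}\prod_{j=1}^\ell f_j(L_j(\bm))\,\overline{f_j}(L_j'(\bm))=cN^{it}\e(w(N))+o_{N\to\infty}(1).
\end{equation*}

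First I would show $t=0$. By the pairwise-independent part of Theorem~\ref{T:Main}, $t=\sum_{j=1}^\ell\bigl(t_{f_j}+t_{\overline{f_j}}\bigr)$, so it suffices to check $t_{\overline{f_j}}=-t_{f_j}$. This follows from the defining characterization of $t_f$ in Theorem~\ref{T:HalaszAP} (equivalently, in the Hal\'asz theorem applied to $f$ along arithmetic progressions): $t_f$ is the unique real number — when it exists — for which the relevant series $\sum_{p}\frac1p\bigl(1-\Re(\chi(p)f(p)p^{-it_f})\bigr)$ converges for a suitable Dirichlet character $\chi$. Taking complex conjugates inside the sum shows that $\overline{f}$ satisfies the same condition with $t_{\overline{f}}=-t_{f}$ and $\chi$ replaced by $\overline{\chi}$; and if no such number exists for $f$ then none exists for $\overline f$, in which case the corresponding constant (and hence $c$) is $0$ and the average tends to $0$. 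Either way $t=0$, so the average equals $c\,\e(w(N))+o(1)$.

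It remains to pass from $c\,\e(w(N))+o(1)$ to genuine convergence; here is where the real valued structure is replaced by the conjugate-pairing. If $c=0$ we are done. If $c\neq 0$, then all $g_i$ are pretentious, so I would take $w=\sum_{j=1}^\ell\bigl(w_{f_j}+w_{\overline{f_j}}\bigr)$ as furnished by Theorem~\ref{T:Main}. Using the explicit form $w_f(N)=\frac1{2\pi}\sum_{p\in\P,\,p\le N}\frac1p\Im\bigl(\chi_f(p)f(p)p^{-it_f}\bigr)$ from the Hal\'asz theorem (applied along the relevant progression), one gets $w_{\overline{f_j}}(N)=\frac1{2\pi}\sum_{p\le N}\frac1p\Im\bigl(\overline{\chi_{f_j}(p)}\,\overline{f_j(p)}\,p^{it_{f_j}}\bigr)=-w_{f_j}(N)$, since $\Im(\bar z)=-\Im(z)$ and we showed $t_{\overline{f_j}}=-t_{f_j}$. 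Hence $w=\sum_{j}\bigl(w_{f_j}-w_{f_j}\bigr)=0$ identically, so $\e(w(N))\equiv 1$ and the average equals $c+o(1)$, i.e. it converges to $c$.

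The one point requiring care — and the step I expect to be the main obstacle — is the bookkeeping around the Dirichlet characters and the exceptional cases: Theorem~\ref{T:Main} only identifies $t$ and $w$ with the $t_{f_j},w_{f_j}$ unconditionally, but the cancellation $t_{\overline{f_j}}=-t_{f_j}$, $w_{\overline{f_j}}=-w_{f_j}$ must be derived from the defining properties in Theorem~\ref{T:HalaszAP}, and one must handle uniformly the case where some $f_j$ (hence some $g_i$) is non-pretentious, where instead of matching parameters one simply invokes $c=0$. Once this is organized, everything else is the routine doubling reduction above.
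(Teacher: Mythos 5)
Your proposal is correct and follows essentially the same route as the paper: use the defining characterizations in Theorem~\ref{T:HalaszAP} to establish $t_{\overline{f}}=-t_f$ and $w_{\overline{f}}=-w_f$, then invoke the pairwise-independent case of Theorem~\ref{T:Main} so that $t$ and $w$ cancel. The paper's proof is the same argument stated in two sentences; your extra case split ($c=0$ versus $c\neq 0$) is unnecessary, since the conventions $t_f:=0$, $w_f:=0$ in the non-pretentious case make the cancellations $t_{\overline{f_j}}=-t_{f_j}$ and $w_{\overline{f_j}}=-w_{f_j}$ hold uniformly.
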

We give an application of Theorem~\ref{T:3} in ergodic theory.

\begin{theorem}
\label{T:4}
 Let  $(X,\CX,\mu)$ be a probability space and  for $n\in \N$ let $T_n\colon X\to X$ be
 invertible measure preserving
transformations  that satisfy
$ T_1:=\id$ and $T_m\circ T_n=T_{mn}$ for every $m,n\in\N$.
  Let also
 $L_j,L_j'\colon  \N^d\to \N$, $j=1,\ldots, \ell$,   be pairwise independent linear forms. Then for all $F,G\in L^2(\mu)$  the averages
\begin{equation}
\label{E:ergodic}
\frac{1}{N^d}\sum_{\bm\in [N]^d}
 \int F(T_{\prod_{j=1}^\ell L_j(\bm)}x)\cdot G(T_{\prod_{j=1}^\ell L'_j(\bm)}x)\ d\mu
\end{equation}
converge   as $N \to \infty$.
\end{theorem}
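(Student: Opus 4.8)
The plan is to deduce this from Theorem~\ref{T:3} by means of the spectral theorem. Passing to the Koopman operators $U_nF:=F\circ T_n$ on $L^2(\mu)$, the hypotheses $T_1=\id$ and $T_m\circ T_n=T_{mn}$ say that $n\mapsto U_n$ is a unitary representation of the multiplicative monoid $(\N,\cdot)$; since each $T_n$ is invertible, this representation extends uniquely to a unitary representation $(U_\gamma)_{\gamma\in\Gamma}$ of the discrete abelian group $\Gamma:=\Q_{>0}$ (written multiplicatively), via $U_{m/n}:=U_mU_n\inv$. Note that the restriction to $\N$ of any character $\chi$ of $\Gamma$ is a completely multiplicative function with values on the unit circle, hence in particular a complex valued multiplicative function with modulus at most $1$.

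First I would rewrite the integrand: for $a,b\in\N$, since $T_b$ preserves $\mu$,
\[
\int F(T_ax)\cdot G(T_bx)\,d\mu=\int (U_{ab\inv}F)(x)\cdot G(x)\,d\mu=\langle U_{ab\inv}F,\overline G\rangle ,
\]
where $\langle h,k\rangle:=\int h\,\overline k\,d\mu$. By the spectral theorem for unitary representations of (discrete) abelian groups there is a finite complex Borel measure $\sigma$ on the compact dual $\wh\Gamma$ with $\langle U_\gamma F,\overline G\rangle=\int_{\wh\Gamma}\chi(\gamma)\,d\sigma(\chi)$ for all $\gamma\in\Gamma$. Taking $\gamma=\prod_{j=1}^\ell L_j(\bm)\big/\prod_{j=1}^\ell L_j'(\bm)$ and using $\chi(\gamma)=\prod_{j=1}^\ell\chi(L_j(\bm))\cdot\overline{\chi}(L_j'(\bm))$, Fubini shows that the average \eqref{E:ergodic} equals
\[
\int_{\wh\Gamma}\Big(\frac{1}{N^d}\sum_{\bm\in[N]^d}\prod_{j=1}^\ell\chi(L_j(\bm))\cdot\overline{\chi}(L_j'(\bm))\Big)\,d\sigma(\chi).
\]

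For each fixed $\chi\in\wh\Gamma$, the inner average is exactly of the form covered by Theorem~\ref{T:3} with $f_j:=\chi|_{\N}$ for $j=1,\ldots,\ell$ (the forms $L_j,L_j'$ being pairwise independent by assumption), so it converges as $N\to\infty$ to some limit $c(\chi)$; it also has modulus at most $1$ for every $N$, and $\chi\mapsto c(\chi)$ is Borel measurable as a pointwise limit of continuous functions. Since $\sigma$ is finite, the dominated convergence theorem allows passing the limit inside the integral, and \eqref{E:ergodic} converges to $\int_{\wh\Gamma}c(\chi)\,d\sigma(\chi)$. This completes the proof.

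The argument is essentially bookkeeping once Theorem~\ref{T:3} is in hand; the only step that needs care is the spectral-theoretic setup, namely recognizing that invertibility together with $T_1=\id$ and $T_m\circ T_n=T_{mn}$ turns $(U_n)$ into the restriction of a unitary representation of $\Q_{>0}$, whose characters are precisely the unimodular completely multiplicative functions to which Theorem~\ref{T:3} applies. Alternatively, one can bypass the general spectral theorem: write $\gamma\mapsto\langle U_\gamma F,F\rangle$ and the analogous functions for $F\pm G$ and $F\pm iG$ as positive-definite functions on $\Gamma$, apply the Herglotz--Bochner theorem to each, and polarize; the remainder of the argument is unchanged.
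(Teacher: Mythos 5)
Your proof is correct and follows essentially the same route as the paper: extend the action to the multiplicative group $\Q_{>0}$, identify $\widehat{\Q_{>0}}$ with the completely multiplicative unimodular functions, obtain a spectral measure via Bochner/SNAG, and conclude by applying Theorem~\ref{T:3} pointwise together with dominated convergence. The only cosmetic difference is that the paper invokes the polarization identity first (reducing to $G=\bar F$ so that Bochner yields a positive measure), whereas you invoke the spectral theorem directly to get a complex measure, noting the polarization alternative yourself.
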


\begin{remark}
The averages \eqref{E:ergodic} were studied in \cite{FH14}  when $d=\ell=2$ and $F=G=\one_A$ with $\mu(A)>0$ in order to prove partition regularity
for certain quadratic equations.
\end{remark}

Note that Theorem~\ref{T:4} is non-trivial even for $d=2$ and  $\ell=1$. Interestingly,  the averages $
\frac{1}{N}\sum_{n=1}^N
 \int F(T_{n}x)\cdot G(x)\ d\mu
$ do not necessarily converge. To see this, let $T_nx=x +\log n  \bmod{1}$ act on $\T$ with the Haar measure and take $F(x)=\e(x)$, $G(x)=\e(-x)$. Then the ergodic averages take the form
$\frac{1}{N}\sum_{n=1}^Nn^{2\pi i}$. Hence, they do not converge.

It is  natural to ask whether  we have convergence of the mean value of a  multiplicative function evaluated at
homogeneous polynomials that do not  necessarily factor linearly.
\begin{problem*}
 Let $f$ be a real valued bounded completely multiplicative function and $P\in\Z[x,y]$ be a homogeneous polynomial with values on the positive integers. Do the averages
 $$
 \frac{1}{N^2}\sum_{1\leq m, n\leq N}f(P(m,n))
 $$
converge as $N\to \infty$? If the multiplicative function $f$ takes complex values, does the modulus of the above averages  converge as $N\to\infty$?
\end{problem*}

  Using Theorem~\ref{T:1} we get  a positive answer when $P(m,n)=\prod_{j=1}^\ell L_j(m,n)$ where $L_j\colon\N^2\to \N$, $j=1,\ldots, \ell$,  are linear forms.
Lastly, we remark  that  for $s\geq 2$ the Gowers norms $\norm{f_N}_{U^s(\Z_N)}$ (defined in Section~\ref{S:2}) of a  multiplicative function $f$ with modulus at most $1$ do not necessarily converge as $N\to \infty$ even if $f$ takes  values in the set $\{-1,1\}$.
To see this, consider the (non-completely) multiplicative function defined by $f(n)=(-1)^{n+1}$.
Then $\lim_{N\to \infty}\norm{f_{2N}}_{U^2(\Z_{2N})}=1$ and $\lim_{N\to \infty}\norm{f_{2N+1}}_{U^2(\Z_{2N+1})}=1/2$. On the other hand, Theorem~\ref{T:3} implies that if $f$ is  a complex valued  multiplicative function with modulus at most $1$, then  the averages
$$
\frac{1}{N^3}\sum_{1\leq m,n,r\leq N} f(m) \, \overline{f}(m+n) \, \overline{f}(m+r)\,  f(m+n+r)
$$
converge as $N\to \infty$. Note that taking the previous average over $\Z_N$ leads to $\norm{f_N}_{U^2(\Z_N)}^4$.
  Theorem~\ref{T:3} also implies convergence  for higher dimensional variants of such averages.

\subsection{Notation and conventions}
  We denote by $\N$ the set of positive integers and by $\P$  the set of prime numbers.
 For  $d,N\in\N$  we let $\Z_N:=\Z/N\Z$, $[N]:=\{1,\dots,N\}$, $[N]^d=[N]\times\cdots \times[N]$. We  identify $[N]$ and $\Z_N$ in the obvious way.  We let  $\e(t):=e^{2\pi it}$.
A linear form  $L\colon \N^d\to \N$ is a function  of the form  $L(\bm)=\bk\cdot\bm$
 for some $\bk\in \N^d$.
Two linear forms are independent if one is not a rational multiple of the other.
With $o_{N\to \infty}(1)$ we denote a quantity that converges to $0$ when  $N\to \infty$ and all other implicit parameters are fixed.
\begin{definition}
A function $f\colon \N\to \C$ is \emph{multiplicative} if $f(mn)=f(m)f(n)$ whenever  $(m,n)=1$. Moreover,
$f$ is \emph{completely multiplicative} if this relation holds for all $m,n\in \N$.

With $\CM_\R$, $\CM_\C$,  $\CM_\T$,
we denote the set  of  multiplicative functions on $\N$  that take values on $[-1,1]$, the unit disc, and the unit circle
 correspondingly.

 A Dirichlet character (denoted by $\chi$) is a  completely multiplicative  function  that is periodic and not identically $0$.
\end{definition}
 \section{Two key ingredients}\label{S:2}
To prove Theorem~\ref{T:Main} we will use a structural result for multiplicative functions proved by the authors in \cite{FH14} and the following  extension of the mean value theorem of Hal\'asz
which  can be found in the exact form stated here in \cite[Theorem 1]{D83}:
\begin{theorem}[Hal\'asz-Delange~\cite{D72,D83, Hal68}]\label{T:HalaszAP}
  Let $f\in \CM_\C$. Then there exists a constant  $t\in \R$ and a slowly-varying sequence
  $w\colon \N\to \R$, such that  the following holds: For every $a,b\in \N$
  there exists a constant
   $c=c_{f,a,b}\in \C$ such that
\begin{equation}
\label{E:fab}
  \frac{1}{N}\sum_{n=1}^Nf(an+b)=c N^{it}\e(w(N))+o_{N\to \infty}(1).
\end{equation}
 If $c_{f,a,b}= 0$ for all $a,b\in \N$,  we set $t_f:=0$ and $w_f:=0$. If $c_{f,a,b}\neq 0$ for some $a,b\in \N$, then  $t=t_f$ is the unique real number
  for which there exists a  primitive Dirichlet character  $\chi=\chi_f$ such that  $\sum_{p\in \P}\frac{1}{p}\,\bigl(1-\Re\bigl(\chi(p)f(p) p^{it}\bigr)\bigr)<\infty$. Furthermore,
 $\chi$  is uniquely determined and we set
$w_f(N):=\sum_{p\in \P, p\leq N}\frac{1}{p}\Im(\chi(p)f(p)p^{-it})$.
 \end{theorem}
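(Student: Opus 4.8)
The plan is to reduce Theorem~\ref{T:HalaszAP} to the mean value theorem of Hal\'asz quoted above (the starred theorem), applied to the twists $\chi f$ of $f$ by Dirichlet characters $\chi$; since $\chi$ is completely multiplicative with $|\chi|\le 1$, each $\chi f$ again lies in $\CM_\C$. Suppose first that $(a,b)=1$. Orthogonality of the characters modulo $a$ together with the discarding of $O(1)$ boundary terms gives
\[
\frac1N\sum_{n=1}^Nf(an+b)=\frac{a}{\phi(a)}\sum_{\chi\bmod a}\overline{\chi(b)}\cdot\frac1{aN}\sum_{m=1}^{aN}\chi(m)f(m)+o_{N\to\infty}(1),
\]
and Hal\'asz's theorem applied to each $\chi f$ yields $c_\chi\in\C$, $t_\chi\in\R$ and slowly-varying sequences $w_\chi$ with $\frac1M\sum_{m\le M}\chi(m)f(m)=c_\chi M^{it_\chi}\e(w_\chi(M))+o_{M\to\infty}(1)$. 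Using $(aN)^{it_\chi}=a^{it_\chi}N^{it_\chi}$ and, since $w_\chi$ is slowly-varying, $w_\chi(aN)=w_\chi(N)+o_{N\to\infty}(1)$, everything comes down to showing that the sequences $N^{it_\chi}\e(w_\chi(N))$ coming from the characters with $c_\chi\neq 0$ can all be replaced, up to a multiplicative constant of modulus one, by a single sequence $N^{it_f}\e(w_f(N))$.

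This uniformity is the heart of the matter. If $\chi_1\neq\chi_2$ modulo $a$ both had $c_{\chi_i}\neq 0$, the uniqueness clause of Hal\'asz's theorem would give $\sum_{p\in\P} p^{-1}\bigl(1-\Re(\chi_i(p)f(p)p^{-it_i})\bigr)<\infty$ for $i=1,2$; these imply $\sum_{p\in\P}p^{-1}(1-|f(p)|)<\infty$ and, taken together, $\sum_{p\in\P}p^{-1}\bigl(1-\Re(\chi_1(p)\overline{\chi_2(p)}\,p^{-i(t_1-t_2)})\bigr)<\infty$. But $\chi_1\overline{\chi_2}$ is a Dirichlet character, and for any Dirichlet character $\psi$ and any $s\in\R$ one has $\sum_{p\in\P}p^{-1}(1-\Re(\psi(p)p^{is}))=\infty$ unless $\psi$ is principal and $s=0$ --- a standard consequence of the non-vanishing of $L(1+is,\psi)$ on the line $\Re s=1$ together with Mertens' theorem; hence $\chi_1=\chi_2$ and $t_1=t_2$, a contradiction. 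So at most one character $\chi^{(a)}$ modulo $a$ contributes. If it exists, then the divergence condition characterizing its parameter is unaffected by deleting the finitely many primes dividing $a$, which identifies $t_{\chi^{(a)}}$ with $t_f$ and exhibits $\chi^{(a)}$ as induced by the primitive character $\chi_f$; moreover $w_{\chi^{(a)}}(N)-w_f(N)$ is, for all large $N$, the constant $-\sum_{p\mid a}p^{-1}\Im(\chi_f(p)f(p)p^{-it_f})$, so absorbing the resulting modulus-one constant into the coefficient gives \eqref{E:fab} with $t=t_f$ and $w=w_f$. If no character modulo $a$ contributes then the average is $o_{N\to\infty}(1)$ and we take $c_{f,a,b}=0$; if this occurs for all $a,b$ we set $t_f=w_f=0$, while in the remaining case $w_f$ is slowly-varying because $\sum_{p\in\P}p^{-1}(1-|f(p)|)<\infty$ and $\sum_{x<p\le x^2}p^{-1}=\log 2+o(1)$, so Cauchy--Schwarz bounds $\sum_{x<p\le x^2}p^{-1}|\Im(\chi_f(p)f(p)p^{-it_f})|$ by $o(1)$.

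It remains to treat $g:=(a,b)>1$. Here $an+b=g\bigl(\tfrac ag n+\tfrac bg\bigr)$ with $(\tfrac ag,\tfrac bg)=1$, but $f$ need not be completely multiplicative, so we separate the primes dividing $a$: for a cutoff $K$ we partition $[N]$ according to the vector $\bigl(\min(v_p(an+b),K)\bigr)_{p\mid a}$, where the classes on which some exponent attains $K$ have density $O\bigl(\sum_{p\mid a}p^{-K}\bigr)$ uniformly in $N$ and so contribute a quantity tending to $0$ as $K\to\infty$. On each of the finitely many remaining classes we may write $an+b=u\cdot m$ with $u$ a fixed integer supported on the primes dividing $a$, with $m$ coprime to $a$ and ranging over an arithmetic progression, and with $f(an+b)=f(u)f(m)$; the average over such a class is therefore $f(u)$ times an average of the type already handled (after at most one further use of the coprime-case reduction), hence contributes the same sequence $N^{it_f}\e(w_f(N))$, and summing over the classes yields \eqref{E:fab} in general. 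I expect the uniformity argument of the second paragraph --- in particular the input that a non-principal Dirichlet character cannot ``pretend'' to be $n^{is}$ --- to be the conceptual crux, while the reduction in the last paragraph, though routine, is the most tedious part.
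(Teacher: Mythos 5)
The paper does not prove Theorem~\ref{T:HalaszAP}; it is quoted verbatim from Delange, with the citation \cite[Theorem 1]{D83} stated immediately before the theorem, so there is no ``paper's own proof'' to compare against. Your reconstruction of the standard derivation from Hal\'asz's mean value theorem --- expanding $\frac1N\sum_n f(an+b)$ by orthogonality of characters mod $a$ in the coprime case, applying Hal\'asz to each twist $\chi f$, using the pretentious triangle inequality together with the non-vanishing of $L(1+is,\psi)$ (equivalently $\sum_p p^{-1}(1-\Re(\psi(p)p^{is}))=\infty$ for $(\psi,s)\ne(\psi_0,0)$) to see that at most one character can contribute and that all contributing characters across moduli are induced by a single primitive $\chi_f$ with a single $t_f$, and finally reducing the non-coprime case by splitting off the $p$-adic parts for $p\mid a$ --- is essentially the argument Delange gives, and it is correct in all its essential points. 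Two small remarks. First, in your justification that $w_f$ is slowly-varying you invoke $\sum_p p^{-1}(1-|f(p)|)<\infty$, but that by itself does not control $|\Im(\chi_f(p)f(p)p^{-it_f})|$ (consider $|f(p)|=1$ with an adversarial argument); what you actually need is the full pretentious condition $\sum_p p^{-1}\bigl(1-\Re(\chi_f(p)f(p)p^{-it_f})\bigr)<\infty$ together with the elementary inequality $|\Im z|^2\le 2(1-\Re z)$ for $|z|\le 1$, after which the Cauchy--Schwarz step you describe goes through. Second, the displayed condition in the paper's statement reads $p^{it}$ but, as your proof correctly uses and as comparison with the quoted Hal\'asz theorem and with the definition of $w_f$ shows, this should be $p^{-it}$; you were right to work with the $p^{-it}$ normalization.
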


\begin{remarks}
$\bullet$ It is important that neither $t_f$ nor $w_f$ depend on $a$ or $b$.

$\bullet$ It follows  from \eqref{E:fab} and the definition of $w_f$, that for $f\in \CM_\C$  we have $t_{\bar{f}}=-t_f$ and   $w_{\bar{f}}=-w_f$.
 Hence, for $f\in \CM_\R $ we have $t_f=0$ and  $w_f=0$. In this case,   the averages in \eqref{E:fab} converge for all $a,b\in \N$; explicit formulas for the limit appear in \cite{BGS12, D83}.
\end{remarks}

We now turn to the structural result; in order to state it  we need to introduce  some notation from~\cite{FH14}.
Given $f\colon \N\to \C$ and $N\in \N$ we  let
 $$
f_N:=f\cdot \one_{[N]}
 $$
 and whenever appropriate, identifying the interval $[N]$ with $\Z_N$,  we consider $f_N$ as a function in $\Z_N$.
By a \emph{kernel} on $\Z_N$ we mean a non-negative function on $\Z_{N}$ with average  $1$.
For every prime number $N$ and $\theta>0$,  in  \cite[Section 3.3]{FH14}  we  defined two positive integers $Q=Q(\theta)$ and $V=V(\theta)$, and for  $N>2QV$, a kernel   $\phi_{N,\theta}$ was defined as follows:
The spectrum  of $\phi_{N,\theta}$ is the set
\begin{equation}
\label{eq:def-Xi}
\Xi_{N,\theta}:=\Big\{\xi\in \Z_N\colon
\Bigl\Vert\frac{Q \xi}N\Bigr\Vert<
\frac{QV}N\Big\},
\end{equation}
and
\begin{equation}
\label{eq:fourier-phi} \widehat{\phi_{N,\theta}}(\xi) :=\begin{cases}
\displaystyle 1-\Bigl\Vert \frac{Q\xi}N\Bigr\Vert\, \frac
N{ QV}
&\ \  \text{if }\  \xi\in\Xi_{N,\theta} \  ;\\
0 & \ \ \text{otherwise.}
\end{cases}
\end{equation}
We recall the definition of the    $U^s$-Gowers uniformity
norms from \cite{G01}.
\begin{definition}[Gowers norms on a cyclic group~\cite{G01}]
Let $N\in \N$  and $a\colon \Z_N\to \C$. For $s\in \N$ the \emph{Gowers $U^s(\Z_N)$-norm} $\norm a_{U^s(\Z_N)}$ of $a$ is defined inductively as follows:   For every $t\in\Z_N$ we write $a_t(n):=a(n+t)$. We let
$$
\norm a_{U^1(\Z_N)}:=\Big|\frac{1}{N}\sum_{n\in \Z_N} a(n)\Big|
$$
and for every $s\in \N$ we let
$$
\norm a_{U^{s+1}(\Z_N)}:=\Bigl(\frac{1}{N}\sum_{t\in \Z_N}\norm{a\cdot \overline a_t}_{U^s(\Z_N)}^{2^s}\Bigr)^{1/2^{s+1}}.
$$
If $f$ is a function on $\N$, then $\norm {f_N}_{U^s(\Z_N)}$ is defined by considering the function $f_N=f\cdot \one_{[N]}$ as a function on $\Z_N$.
\end{definition}

The following structural result from \cite{FH14} is crucial for our study:

\begin{theorem}[Structure theorem for multiplicative functions~{\bf \cite[Theorem~8.1]{FH14}}]
\label{T:Structure}
Let $s\in \N$  and  $\ve>0$. Then there exists a  real number  $\theta>0$ and $N_0\in \N$, depending on $s$ and $\ve$ only,    such that
 for every  prime $N\geq N_0$,    every $f\in\CM_\C$ admits
the decomposition
$$
 f(n)=f_{N,\st}(n)+f_{N,\un}(n), \quad \text{ for every }\  n\in [N],
$$
 where  $f_{N,\st},f_{N,\un}\colon [N]\to \C$ are bounded by $1$ and $2$ respectively and   satisfy:
\begin{enumerate}
\item
\label{it:weakUs-1}  $f_{N,\st}=f_N*\phi_{N,\theta}$ where
$\phi_{N,\theta}$ is the  kernel on $\Z_N$ defined  previously and
  the convolution product is defined  in $\Z_N$;
\item
\label{it:weakUs-3}
 $\norm{f_{N,\un}}_{U^s(\Z_N)}\leq\ve$.
\end{enumerate}
\end{theorem}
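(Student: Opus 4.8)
Since $\phi_{N,\theta}$ is a kernel on $\Z_N$ (non-negative, average $1$), setting $f_{N,\st}:=f_N*\phi_{N,\theta}$ forces $\norm{f_{N,\st}}_\infty\le\norm{f_N}_\infty\le1$, so that $f_{N,\un}:=f_N-f_{N,\st}$ is automatically bounded by $2$ and the first listed property holds by construction. The entire content of the theorem is therefore the uniformity estimate
$$
\norm{f_N-f_N*\phi_{N,\theta}}_{U^s(\Z_N)}\le\ve\qquad\text{for every }f\in\CM_\C,
$$
to be established for a suitable $\theta=\theta(s,\ve)>0$ and all primes $N\ge N_0(s,\ve)$. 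The kernel $\phi_{N,\theta}$ is precisely the indicator-type weight of the classical major arcs $\bigcup_{q\le Q}\bigcup_a\{N(a/q+\beta):|\beta|<V/N\}$, so the statement says that the minor-arc part of an arbitrary bounded multiplicative function is Gowers-uniform of every order.

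The plan is to argue by contradiction. If the displayed norm exceeded $\ve$, the inverse theorem for the Gowers norms \cite{GTZ12c}, applied to the $2$-bounded function $f_{N,\un}$, would produce an $(s-1)$-step nilmanifold $X=G/\Gamma$, an element $g\in G$, and a $1$-bounded, $1$-Lipschitz function $F$ on $X$, all of complexity at most some $M=M(\ve)$, with $\bigl|\frac1N\sum_{n\in[N]}f_{N,\un}(n)\overline{F(g^n\Gamma)}\bigr|\ge\delta(\ve)>0$. Because $\phi_{N,\theta}$ is real-valued and even, moving the convolution onto the nilsequence $\psi(n):=F(g^n\Gamma)$ rewrites this as $\bigl|\frac1N\sum_{n\in[N]}f_N(n)\overline{\psi_\theta(n)}\bigr|\ge\delta(\ve)$, where $\psi_\theta:=\psi-\phi_{N,\theta}*\psi$, so that $\widehat{\psi_\theta}(\xi)=(1-\widehat{\phi_{N,\theta}}(\xi))\,\widehat{\psi}(\xi)$: the function $\psi_\theta$ is the high-frequency part of $\psi$, carrying no Fourier mass at the centres of $\Xi_{N,\theta}$ and little mass near them. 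Everything thus reduces to the following correlation estimate, which is the genuine heart of the proof: for every $M$ and every $\eta>0$ there exist $\theta_1=\theta_1(M,\eta)>0$ and $N_1(M,\eta)$ such that for all $\theta\le\theta_1$, all primes $N\ge N_1$, all $f\in\CM_\C$, and all nilsequences $\psi$ of complexity $\le M$,
$$
\Bigl|\frac1N\sum_{n\in[N]}f_N(n)\,\overline{\psi_\theta(n)}\Bigr|\le\eta .
$$
Applying this with $M=M(\ve)$ and $\eta<\delta(\ve)$ contradicts the inverse-theorem output and proves the theorem.

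To prove the correlation estimate I would feed the polynomial orbit $n\mapsto g^n\Gamma$ into the quantitative factorisation theorem of Green and Tao \cite{GT12a}, with a growth function chosen only at the very end: this writes $g^n\Gamma=\epsilon(n)\,g'(n)\Gamma'\,\gamma(n)$ with $\gamma$ periodic of period $q\le M'$, with $\epsilon$ slowly-varying ($(M'/N)$-Lipschitz on $[N]$), and with $n\mapsto g'(n)\Gamma'$ totally $\rho$-equidistributed on a subnilmanifold $X'$, where $M'=M'(M,\rho)$ and $\rho>0$ are still free. Pushing $F$ through the factorisation and splitting $[N]$ into residue classes modulo $q$, there are two regimes on each class. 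If $X'$ is a single point (the major-arc case), then $\psi\cdot\one_{[N]}$ is, up to an $O(q/N)$ error, a $q$-periodic sequence times a slowly-varying one; its discrete Fourier transform is then concentrated in $\ell^2$ on small neighbourhoods of the rationals $\{kN/q\}$, which lie among the centres of $\Xi_{N,\theta}$ as soon as $q\mid Q(\theta)$, and on which $\widehat{\phi_{N,\theta}}\approx1$ provided $V(\theta)$ exceeds the bandwidth of $\epsilon$; hence $\norm{\psi_\theta}_{L^2(\Z_N)}$ is small, and the correlation is small by Cauchy--Schwarz against $f_N$, whose $L^2(\Z_N)$-norm is at most $1$. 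If instead the $g'$-orbit is genuinely $\rho$-equidistributed (the minor-arc case), then $\psi$ has negligible Fourier mass near every small-denominator rational, so $\norm{\psi_\theta-\psi}_{L^2(\Z_N)}$ is small and it is enough to bound $\frac1N\sum_{n\in[N]}f(n)\overline{\psi(n)}$, which requires the orthogonality of multiplicative functions to equidistributed nilsequences. The parameters get chosen in the order $(s,\ve)\mapsto(M,\delta)\mapsto\eta\mapsto\rho\mapsto M'\mapsto\theta\mapsto N$, so that each ``small'' quantity above is genuinely $<\eta$.

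The main obstacle is this last step: the decay $\frac1N\sum_{n\in[N]}f(n)\overline{\psi(n)}\to0$ for an equidistributed nilsequence $\psi$ and an \emph{arbitrary} $f\in\CM_\C$. For $f=\mu$ this is the Green--Tao--Ziegler theorem \cite{GT12b}, but its proof exploits the vanishing of $\mu$ on progressions and Vaughan-type identities, which have no analogue for a general bounded multiplicative function. I would instead proceed in two steps. First, a K\'atai--Bourgain--Sarnak--Ziegler dispersion argument reduces the estimate to the quantitative equidistribution of $n\mapsto(g'(p_1n),g'(p_2n))$ on $X'\times X'$ for almost all pairs of distinct primes $p_1,p_2$, the exceptional pairs being controlled by Tur\'an--Kubilius or large-sieve bounds. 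Second, and more delicately, one must rule out a conspiracy between $f$ and $\psi$ through an Archimedean twist $n^{it}$; here the mean value theorem of Hal\'asz quoted in the introduction re-enters as a tool inside the proof — it shows that the only $f\in\CM_\C$ whose correlations with bounded-complexity nilsequences fail to decay are those pretending to be $n^{it}\chi(n)$, and that for such $f$ the correlation is carried entirely by the major arcs $\Xi_{N,\theta}$ and is removed on passing to $\psi_\theta$. Making this ``Hal\'asz theorem for correlations with nilsequences'' quantitative and uniform in $f$, and dovetailing it with the equidistribution input and the parameter bookkeeping above, is the genuinely hard part of the whole argument.
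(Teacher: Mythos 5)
This statement is not proved in the paper at all: it is quoted verbatim from \cite[Theorem~8.1]{FH14}, so the only meaningful comparison is with the proof given there, whose overall architecture (Gowers inverse theorem \cite{GTZ12c}, the Green--Tao factorization of nilsequences \cite{GT12a}, a K\'atai-type orthogonality criterion, and careful parameter bookkeeping via a growth function) your sketch does reproduce in outline. The decomposition itself and the bound $\norm{f_{N,\st}}_\infty\le 1$, $\norm{f_{N,\un}}_\infty\le 2$ are indeed trivial, and you correctly identify that the whole content is the estimate $\norm{f_N-f_N*\phi_{N,\theta}}_{U^s(\Z_N)}\le\ve$ uniformly over $f\in\CM_\C$.

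The genuine gap is that your argument reduces the theorem to exactly the main technical result of \cite{FH14} and then stops. The ``correlation estimate'' you isolate --- that an arbitrary $f\in\CM_\C$ has uniformly small correlation with the high-pass part $\psi_\theta$ of any bounded-complexity nilsequence, and in particular with totally equidistributed nilsequences --- is not a lemma one can wave at: for $f=\mu$ it is the Green--Tao--Ziegler theorem \cite{GT12b}, and for general bounded multiplicative $f$ it is precisely the discorrelation theorem whose proof occupies the bulk of \cite{FH14}. You acknowledge this (``the genuinely hard part''), but a proof attempt that leaves this step open has not proved the structure theorem; the K\'atai/Bourgain--Sarnak--Ziegler reduction plus a hoped-for ``Hal\'asz theorem for correlations with nilsequences'' is a plan, not an argument, and the delicate point it must handle --- phases $\e(n\xi/N)$ with $\xi$ near the major-arc centres, for which $(\psi(p_1n)\overline{\psi(p_2n)})_n$ does \emph{not} equidistribute --- is exactly why the kernel $\phi_{N,\theta}$ has the specific spectrum \eqref{eq:def-Xi} and why the argument in \cite{FH14} is long. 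There is also a quantifier problem in your parameter ordering: you choose $\rho$ before $\theta$, but your own minor-arc step needs the Fourier mass of $\psi$ on the $2QV$ points of $\Xi_{N,\theta}$ to be small compared with $(QV)^{-1/2}$, i.e.\ $\rho$ must be chosen after $Q(\theta)$ and $V(\theta)$ (or the circularity broken by a genuinely growth-function version of the factorization theorem, which you mention but do not implement). So the proposal is a reasonable road map of \cite{FH14}, but as a proof of Theorem~\ref{T:Structure} it is incomplete at its central step.
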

We think of $f_{N,\st}$ and  $f_{N,\un}$ as the structured and uniform component of $f$ respectively.

From this point on we assume that
$N>2QV$. When convenient we identify
$\Z_N$ with the set $\{0,\ldots, N-1\}$
and we denote by  $(a,b) \! \! \! \mod{N}$
the set  that consists of those  $\xi\in  \Z_N$
such that $\xi+kN\in (a,b)$ for some $k\in \Z$.
 Note that $\xi\in \Xi_{N,\theta}$ if and only if there exists  $p\in \Z$
such that $\xi-\frac{p}{Q}N\in (-V,V) \! \! \mod{N}$. Hence,
$$
\Xi_{N,\theta}=\bigcup_{p=0}^{Q-1}\big(\frac{p}{Q}N-V,  \frac{p}{Q}N+V\big) \! \! \! \mod{N}.
$$
We may choose to include or omit the endpoints of each interval (if they are integers), since for these values
the Fourier transform of the kernel is $0$. Hence, we can assume that
\begin{equation}\label{E:StructureXi1}
\Xi_{N,\theta}=\bigcup_{p=0}^{Q-1} \Xi_{N,\theta,p}
\end{equation}
where for $p=0,\ldots, Q-1$ we have
$
\Xi_{N,\theta,p}:=\big\{\big\lfloor\frac{p}{Q}N\big\rfloor+j \bmod{N}\colon -V<j\leq V\big\}.
$
Note that for fixed $N>2QV$ and $\theta>0$ the sets $\Xi_{N,\theta,p}$, $p=0,\ldots, Q-1$, are disjoint, each of cardinality $2V$,
hence $|\Xi_{N,\theta}|=2QV$.
Furthermore,  if $N\equiv 1 \bmod{Q}$,   then
\begin{equation}\label{E:StructureXi2}
 \Xi_{N,\theta,p}=\big\{\frac{p}{Q}(N-1)+j  \bmod{N}\colon -V<j\leq V\big\}.
\end{equation}
Restricting $N$ to a specific congruence class $\bmod Q$ is needed in the proof of Lemma~\ref{L:Structured'}.

\section{Proof of the main results}\label{S:proofs}
\subsection{Preparatory lemmas} In what follows we  use repeatedly the following simple fact:
If $(w(n))$ is a slowly-varying sequence, then for every complex valued bounded sequence $(a(n))$ we have
$$
\frac{1}{N}\sum_{n=1}^Na(n)\e(w(n))=\e(w(N))\cdot \frac{1}{N}\sum_{n=1}^Na(n)+o_{N\to \infty}(1).
$$

We start with some preliminary lemmas.

 \begin{lemma}\label{L:Partial'}
Let $(a(n))$
be a bounded sequence of complex numbers and $(w(n))$ be a slowly-varying sequence. Suppose that there exist $c\in \C$ and $t\in \R$ such that
\begin{equation}\label{E:assumption}
\frac{1}{N}\sum_{n=1}^Na(n)=cN^{it}\e(w(N))+o_{N\to \infty}(1).
\end{equation}
Then  for every $\alpha\in \R$  we have
$$
\frac{1}{N}\sum_{n=1}^Na(n)\e\big( n\frac{\alpha}{N}\big)=c'N^{it}\e(w(N))+o_{N\to \infty}(1)
$$
where $c':=c (1+it)  \int_{0}^1 y^{it}\e\big(  \alpha y\big)\, dy$.
\end{lemma}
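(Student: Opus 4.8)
The plan is to reduce the exponentially-twisted sum to the untwisted sum via summation by parts (Abel summation), treating $S(M):=\sum_{n=1}^M a(n)$ as the object whose asymptotics we control, and $\e(n\alpha/N)$ as the slowly-varying weight to be summed against. First I would apply the slowly-varying reduction recalled just before the lemma to strip off the factor $\e(w(N))$: since $w$ is slowly varying and $(a(n)\e(n\alpha/N))_n$ is bounded for each fixed $N$ (and the weight $\e(n\alpha/N)$ does not spoil this), it suffices to analyze $\frac1N\sum_{n=1}^N a(n)\e(n\alpha/N)$ against the target $c'N^{it}$, with the $\e(w(N))$ factor reinstated at the end. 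Actually, more care is needed because $\e(n\alpha/N)$ depends on $N$; so I would instead keep track of everything directly.

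The main computation: write $a(n)=S(n)-S(n-1)$ with $S(0)=0$ and perform Abel summation,
\[
\frac1N\sum_{n=1}^N a(n)\,\e\!\Big(n\frac{\alpha}{N}\Big)
=\frac1N\,S(N)\,\e(\alpha)-\frac1N\sum_{n=1}^{N-1}S(n)\Big(\e\!\Big((n+1)\tfrac{\alpha}{N}\Big)-\e\!\Big(n\tfrac{\alpha}{N}\Big)\Big).
\]
By hypothesis \eqref{E:assumption}, $S(n)=c\,n^{1+it}\e(w(n))+o(n)$ uniformly as $n\to\infty$ (the error is $o(n)$, and for small $n$ it is $O(n)$, which is harmless after division by $N$ and the $1/N$-size increments). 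The increment $\e((n+1)\alpha/N)-\e(n\alpha/N)$ equals $\e(n\alpha/N)\cdot(\e(\alpha/N)-1)=\e(n\alpha/N)\cdot\big(2\pi i\alpha/N+O(1/N^2)\big)$. Substituting and replacing the Riemann sum $\frac1N\sum_{n=1}^{N-1} (n/N)^{1+it}\e(n\alpha/N)\cdot(\cdots)$ by the corresponding integral, and using that $w$ is slowly varying so that $\e(w(n))=\e(w(N))+o(1)$ for $n$ in the range $[N^{1/2},N]$ (the tail $n<N^{1/2}$ contributing $o(1)$ to the whole average), I get
\[
\frac1N\sum_{n=1}^N a(n)\,\e\!\Big(n\frac{\alpha}{N}\Big)
= c\,N^{it}\e(w(N))\Big(\e(\alpha)-2\pi i\alpha\int_0^1 y^{1+it}\e(\alpha y)\,dy\Big)+o_{N\to\infty}(1).
\]
An integration by parts on $\int_0^1 y^{1+it}\e(\alpha y)\,dy$ (differentiating $y^{1+it}$, integrating $\e(\alpha y)$, noting $\e(\alpha y)=\frac{1}{2\pi i\alpha}\frac{d}{dy}\e(\alpha y)$) shows that $\e(\alpha)-2\pi i\alpha\int_0^1 y^{1+it}\e(\alpha y)\,dy=(1+it)\int_0^1 y^{it}\e(\alpha y)\,dy$, which is exactly $c'/c$. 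One should also dispatch the degenerate case $\alpha=0$ separately (or by continuity), where both sides reduce to \eqref{E:assumption} and $c'=c$.

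The main obstacle I anticipate is making the Riemann-sum-to-integral passage rigorous \emph{uniformly} in the presence of the $o(n)$ error term in $S(n)$ and the slowly-varying factor $\e(w(n))$: one must split the range at, say, $n=\eta N$ and $n=N^{1/2}$, bound the contribution of small $n$ crudely, use the slow variation of $w$ on the bulk range to freeze $\e(w(n))$ at $\e(w(N))$ up to $o(1)$, and control the $o(n)$ term by an $\varepsilon$-$\delta$ argument (for any $\varepsilon$, $|S(n)-c\,n^{1+it}\e(w(n))|\le \varepsilon n$ for $n\ge n_0(\varepsilon)$), so that its contribution to the weighted sum is $O(\varepsilon)+o(1)$. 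Everything else—the Abel summation, the Taylor expansion of $\e(\alpha/N)-1$, and the final integration by parts identifying $c'$—is routine bookkeeping.
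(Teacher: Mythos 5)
Your proof is correct and belongs to the same family as the paper's argument (partial summation plus a Riemann-sum limit), but the decomposition is organized differently in a way that is worth comparing. The paper applies Abel summation not to $a(n)$ itself but to the \emph{error} $a(n)-n^{it}\e(w(n))$: setting $S(n):=\sum_{k\le n}\bigl(a(k)-k^{it}\e(w(k))\bigr)$, one checks $S(n)/n\to 0$ from \eqref{E:assumption} and from $\frac1N\sum n^{it}=\frac{N^{it}}{1+it}+o(1)$, and then partial summation shows directly that $\frac1N\sum\bigl(a(n)-n^{it}\e(w(n))\bigr)\e(n\alpha/N)=o(1)$, with the $\varepsilon$--$n_0$ argument identical to the one you sketch. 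After this the remaining main term is a single Riemann sum for $\int_0^1 y^{it}\e(\alpha y)\,dy$, and $c'$ drops out with no further manipulation. You instead apply Abel summation to $a(n)$ itself with $S(n)=\sum_{k\le n}a(k)$, substitute $S(n)=cn^{1+it}\e(w(n))+o(n)$, expand $\e(\alpha/N)-1=2\pi i\alpha/N+O(1/N^2)$, obtain a Riemann sum for $\int_0^1 y^{1+it}\e(\alpha y)\,dy$ together with the boundary term $\frac1N S(N)\e(\alpha)$, and then need one integration by parts to identify $\e(\alpha)-2\pi i\alpha\int_0^1 y^{1+it}\e(\alpha y)\,dy=(1+it)\int_0^1 y^{it}\e(\alpha y)\,dy=c'/c$. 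Your integration by parts is correct, as is your treatment of the $o(n)$ error and the freezing of $\e(w(n))$ on $[N^{1/2},N]$ via slow variation with the small-$n$ range contributing $o(1)$. The net effect is that you carry a boundary term, a Taylor remainder, and an integration-by-parts identity that the paper's bookkeeping avoids by subtracting the model sequence $n^{it}\e(w(n))$ first; the trade-off is purely cosmetic, and both routes are sound. Your remark about $\alpha=0$ is harmless but unnecessary: the Abel step degenerates to the boundary term only, which matches $c'=c(1+it)\int_0^1 y^{it}\,dy=c$.
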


\begin{proof}
Without loss of generality we can assume that $c=1/(1+it)$.

We first claim that
\begin{equation}\label{E:a1}
\frac{1}{N}\sum_{n=1}^Na(n)\e\big( n\frac{\alpha}{N}\big)=\e(w(N))\cdot \frac{1}{N}\sum_{n=1}^Nn^{it} \e\big( n\frac{\alpha}{N}\big)+o_{N\to \infty}(1).
\end{equation}
Indeed,
for $n\in \N$ let
 $$
 S(n):=\sum_{k=1}^n\big(a(k)-k^{it}\e(w(k))\big).
  $$
  Since  $w$ is a slowly-varying  sequence and  $\frac{1}{N}\sum_{n=1}^Nn^{it}=\frac{N^{it}}{1+it}+o_{N\to\infty}(1)$, we get that $\frac{1}{N}\sum_{n=1}^Nn^{it}\e(w(n))=\frac{N^{it}}{1+it}\e(w(N))+o_{N\to\infty}(1)$; hence \eqref{E:assumption} gives that   $S(n)/n\to 0$ as $n\to \infty$.
Using partial summation we see that the modulus of the average
  $$
\frac{1}{N}\sum_{n=1}^N\big(a(n)-n^{it}\e(w(n))\big)\e\big( n\frac{\alpha}{N}\big)
  $$
 is at most
$$
\frac{1}{N}\big(\sum_{n=2}^{N-1}|S(n)|\big|\e\big( (n+1)\frac{\alpha}{N}\big)-\e\big( n\frac{\alpha}{N}\big)\big|+|S(N)|\big)+o_{N\to\infty}(1).
$$
Let $\varepsilon>0$. Since  $S(n)/n\to 0$ as $n\to \infty$ we have $|S(n)|/n\leq \varepsilon$
for every sufficiently large $n$, and thus the last expression  is bounded by
$$
\frac{1}{N}\big(\sum_{n=2}^{N-1}\varepsilon n\frac{|2\pi\alpha|}{N}+\varepsilon N\big)+o_{N\to\infty}(1)\leq (|\pi\alpha|+1)\varepsilon +o_{N\to\infty}(1).
$$
Since $\varepsilon$ is arbitrary we get that
$$
\frac{1}{N}\sum_{n=1}^N\big(a(n)-n^{it}\e(w(n))\big)\e\big( n\frac{\alpha}{N}\big)=o_{N\to\infty}(1)
$$
and the asymptotic \eqref{E:a1}  follows because $w$ is  slowly-varying.

Lastly, note that
$$
\frac{1}{N}\sum_{n=1}^Nn^{it}\e\big( n\frac{\alpha}{N}\big)=
N^{it}
\cdot  \frac{1}{N}\sum_{n=1}^N\big(\frac{n}{N}\big)^{it}\e\big( n\frac{\alpha}{N}\big).
$$
Interpreting the last average as a Riemann sum we get that it converges to the integral
$\int_0^1 y^{it}\e(\alpha y)\, dy$ (integration by parts shows that the integral converges).
Hence,
$$
\frac{1}{N}\sum_{n=1}^Nn^{it}\e\big( n\frac{\alpha}{N}\big)=c'\cdot N^{it}+o_{N\to\infty}(1)
$$
where $c':=\int_{0}^1 y^{it}\e\big( \alpha y\big)\, dy$.

 Combining the above we get the asserted claim.
 \end{proof}
The next lemma enables us to get asymptotics for the discrete Fourier transform of elements of $\CM_\C$ for certain ``major arc'' frequencies.
\begin{lemma}
\label{L:Fourier'}
 Let $f\in \CM_\C$,  $t=t_f$, $w=w_f$,  be as in Theorem~\ref{T:HalaszAP}.
 Furthermore, let  $Q\in \N$, $p,\xi'\in \Z$,
  and
 $$\xi_N=\frac{p}{Q}N+\frac{\xi'}{Q}, \quad N\in \N.$$
  Then there exists a constant $c=c_{f,p,Q,\xi'}\in \C$ such that
\begin{equation}
\label{E:fouri}
 \frac{1}{N}\sum_{n=1}^{N}f(n)\e\big(-n\frac{\xi_N}{N}\big)=cN^{it}\e(w(N))+o_{N\to\infty}(1).
\end{equation}
\end{lemma}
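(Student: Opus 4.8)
The plan is to reduce the estimate \eqref{E:fouri} to the Hal\'asz--Delange asymptotic \eqref{E:fab} by splitting the summation according to the residue of $n$ modulo $Q$ and then applying Lemma~\ref{L:Partial'} to each arithmetic progression. First I would write $n=Qk+r$ with $r\in\{1,\ldots,Q\}$ and $1\le k\le N/Q$ (up to a negligible boundary term), so that
$$
\frac1N\sum_{n=1}^N f(n)\,\e\Bigl(-n\frac{\xi_N}{N}\Bigr)
=\frac1N\sum_{r=1}^{Q}\sum_{k}f(Qk+r)\,\e\Bigl(-(Qk+r)\frac{\xi_N}{N}\Bigr).
$$
Plugging in $\xi_N=\frac pQ N+\frac{\xi'}{Q}$, the phase $\e(-(Qk+r)\xi_N/N)$ factors: the $Qk$ part contributes $\e(-pk)\,\e(-k\xi'/N)=\e(-k\xi'/N)$ since $p\in\Z$, while the $r$ part contributes a bounded constant $\e(-r\xi_N/N)=\e(-rp/Q)\e(-r\xi'/(QN))$, and the latter factor tends to $\e(-rp/Q)$ as $N\to\infty$ uniformly in $r\in\{1,\ldots,Q\}$. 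So up to $o_{N\to\infty}(1)$ the sum becomes a fixed finite linear combination, over $r$, of averages of the form $\frac1M\sum_{k\le M}f(Qk+r)\,\e\bigl(k\frac{\alpha}{M}\bigr)$ with $M\approx N/Q$ and $\alpha=-\xi'/Q$.

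Next I would invoke Theorem~\ref{T:HalaszAP} with $a=Q$, $b=r$: it supplies the same $t=t_f$ and $w=w_f$ (independent of $r$) and constants $c_{f,Q,r}$ with $\frac1M\sum_{k\le M}f(Qk+r)=c_{f,Q,r}M^{it}\e(w(M))+o_{M\to\infty}(1)$. This is exactly the hypothesis \eqref{E:assumption} of Lemma~\ref{L:Partial'} (applied to the bounded sequence $k\mapsto f(Qk+r)$), so Lemma~\ref{L:Partial'} yields
$$
\frac1M\sum_{k\le M}f(Qk+r)\,\e\Bigl(k\frac{\alpha}{M}\Bigr)=c'_r\,M^{it}\e(w(M))+o_{M\to\infty}(1),
$$
with $c'_r=c_{f,Q,r}(1+it)\int_0^1 y^{it}\e(\alpha y)\,dy$. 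Summing over $r$ with the bounded weights $\e(-rp/Q)$ and using that $w$ is slowly-varying together with $M^{it}=(N/Q)^{it}=Q^{-it}N^{it}$, I collect everything into a single expression $c\,N^{it}\e(w(N))+o_{N\to\infty}(1)$ with $c=\frac1Q Q^{-it}(1+it)\bigl(\int_0^1 y^{it}\e(-\xi' y/Q)\,dy\bigr)\sum_{r=1}^Q c_{f,Q,r}\,\e(-rp/Q)$; the precise constant is irrelevant, only its existence and its independence from $N$ matter.

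The routine but slightly fiddly points are: (a) matching the two slowly-varying sequences, i.e.\ replacing $\e(w(N/Q))$ by $\e(w(N))$ — this is immediate since $w$ is slowly-varying and $N/Q\le N\le (N/Q)^2$ eventually, so $|w(N)-w(N/Q)|\to0$; and (b) controlling the boundary terms coming from the fact that $N$ need not be divisible by $Q$ and from the extra phase factor $\e(-r\xi'/(QN))$, both of which contribute $O(Q/N)+o_{N\to\infty}(1)$. The main conceptual obstacle is simply making sure that the frequency $\xi_N/N$, which is \emph{not} of the form $\alpha/N$ with $\alpha$ fixed, is handled correctly: the point is that after the residue decomposition the ``integer part'' $\tfrac pQ N$ of the phase interacts only with the $Qk$ term and disappears because $p\in\Z$, leaving a genuine ``minor-arc-scaled'' frequency $\xi'/(QN)$ of the type Lemma~\ref{L:Partial'} is built to treat. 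Once that observation is in place the rest is bookkeeping.
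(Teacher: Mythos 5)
Your proposal is correct and follows essentially the same route as the paper's proof: split $n=Qk+r$ by residue modulo $Q$, exploit the integrality of $p$ so that the $\frac{p}{Q}N$ part of $\xi_N$ reduces to the fixed phase $\e(-rp/Q)$, invoke Theorem~\ref{T:HalaszAP} for the progressions $Qk+r$, and then apply Lemma~\ref{L:Partial'} with $\alpha=-\xi'/Q$. The bookkeeping with $M=\lfloor N/Q\rfloor$, $M^{it}=Q^{-it}N^{it}+o(1)$, and the slowly-varying property of $w$ is exactly what the paper does.
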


 \begin{remark}
 We are going to apply this for integers $p,\xi', N$ such that $N\equiv 1 \bmod{Q}$ and  $p+\xi'\equiv 0 \bmod{Q}$, in which case $\xi_N$ is an integer.
\end{remark}

\begin{proof}
 Notice first that the left hand side in \eqref{E:fouri} is equal to
 $$
 \frac{1}{Q}\sum_{r=1}^Q\e\big(-r\frac{p}{Q}\big) \frac{1}{\lfloor N/Q \rfloor}\sum_{n=1}^{ \lfloor N/Q \rfloor}f(Qn+r)\e\big(-(Qn+r)\frac{\xi'}{QN}\big)+o_{N\to\infty}(1).
 $$
 Hence,   it suffices to show that for every fixed $Q,\xi',$ and $r\in [Q]$, we have the asserted asymptotic for
the averages
$$
 \frac{1}{ \lfloor N/Q \rfloor}\sum_{n=1}^{\lfloor N/Q\rfloor}f(Qn+r)\e\big(-(Qn+r)\frac{\xi'}{QN}\big).
 $$
 Since $\e(-r\xi'/(QN))\to 1$ as $N\to \infty$ it suffices to prove the asserted asymptotic
for the averages
 \begin{equation}\label{E:Av1'}
 \frac{1}{ \lfloor N/Q \rfloor}\sum_{n=1}^{ \lfloor N/Q \rfloor}f(Qn+r)\e\big(-n\frac{\xi'}{N}\big).
 \end{equation}
  By Theorem~\ref{T:HalaszAP} there exists $c=c_{f,Q,r}\in \C$ such that
 $$
\frac{1}{N}\sum_{n=1}^Nf(Qn+r)=cN^{it}\e(w(N))+o_{N\to\infty}(1).
 $$
Using Lemma~\ref{L:Partial'} for $a(n):=f(Qn+r)$  and that $\lfloor N/Q \rfloor^{it}-(N/Q)^{it}\to 0$ and  $w(N)-w(\lfloor N/Q \rfloor)\to 0$ as $N\to \infty$ (since $w$ is slowly-varying), we deduce  the needed asymptotic for the averages  \eqref{E:Av1'}. This completes the proof.
 \end{proof}
Next we   analyze the asymptotic behavior of the averages \eqref{E:multi1} when in place of the multiplicative functions $f_1,\ldots, f_\ell$ we use  their structured components given by Theorem~\ref{T:Structure}.
\begin{lemma}
\label{L:Structured'}
  Let  $\theta>0$, $d,Q\in \N$,  $f_1,\ldots, f_\ell \in \CM_\C$, and
  $t_{f_j}$, $w_{f_j}$, $j=1,\ldots, \ell$,   be as in Theorem~\ref{T:HalaszAP}. Let also  $L_1,\ldots, L_\ell\colon \N^d\to \N$ be linear forms  let $\kappa$ be twice the sum of their coefficients.
For $N\in \N$ let $\tN>N$  be a prime that satisfies  $\tN\equiv 1  \bmod{Q}$ and suppose that the limit $\beta:=\lim_{N\to \infty} (N/\tN)$ exists  and is smaller  than or equal to $\kappa\inv$.
For $j=1,\ldots, \ell,$ let
$f_{j,\tN,\st}:=f_{j,\tN}*\phi_{\tN,\theta}$ where $\phi_{\tN,\theta}$ is defined by \eqref{eq:fourier-phi} and the convolution product is defined in $\Z_\tN$.
Then there exists  $c\in \C$ such that
 \begin{equation}\label{E:Str1'}
\frac{1}{N^d}\sum_{\bm \in  [N]^d} \prod_{j=1}^\ell f_{j,\tN, \st}(L_j(\bm))=c  N^{it}\e(w(N))+o_{N\to\infty}(1)
\end{equation}
where $t:=\sum_{j=1}^\ell t_{f_j}$ and $w:=\sum_{j=1}^\ell w_{f_j}$.
\end{lemma}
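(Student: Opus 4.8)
The plan is to expand each structured component $f_{j,\tN,\st}$ into its finitely many Fourier modes on $\Z_{\tN}$, substitute, and use the linearity of the forms $L_j$ to split the average into a bounded linear combination of products of a ``major arc'' Fourier coefficient of each $f_j$ with an elementary exponential sum over $[N]^d$; each ingredient is then evaluated separately --- the Fourier coefficients through Lemma~\ref{L:Fourier'}, the exponential sums through a Riemann-sum argument.

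Concretely, since by the convolution theorem $\widehat{f_{j,\tN,\st}}=\widehat{f_{j,\tN}}\cdot\widehat{\phi_{\tN,\theta}}$ and $\widehat{\phi_{\tN,\theta}}$ is supported on $\Xi_{\tN,\theta}$ with modulus at most $1$, Fourier inversion gives
\[
f_{j,\tN,\st}(n)=\sum_{\xi\in\Xi_{\tN,\theta}}\widehat{f_{j,\tN}}(\xi)\,\widehat{\phi_{\tN,\theta}}(\xi)\,\e\!\Big(\frac{n\xi}{\tN}\Big)\qquad(n\in\Z_{\tN}),
\]
a sum of $|\Xi_{\tN,\theta}|=2QV$ terms whose coefficients are bounded by $1$, a bound independent of $N$. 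Because $\beta\le\kappa\inv$, for all large $N$ every $L_j(\bm)$ with $\bm\in[N]^d$ satisfies $L_j(\bm)\le\tfrac{\kappa}{2}N<\tN$, hence is an honest integer of $\{1,\dots,\tN-1\}\subset\Z_{\tN}$; substituting the above, expanding the product over $j$, and writing $\sum_j L_j(\bm)\xi_j=\sum_{i=1}^d m_i\,A_i(\xi)$ with $A_i(\xi):=\sum_j(\bk_j)_i\xi_j$, reduces \eqref{E:Str1'} to proving, for each fixed $\xi=(\xi_1,\dots,\xi_\ell)\in(\Xi_{\tN,\theta})^\ell$ (at most $(2QV)^\ell$ of them), that
\[
\Big(\prod_{j=1}^\ell\widehat{f_{j,\tN}}(\xi_j)\,\widehat{\phi_{\tN,\theta}}(\xi_j)\Big)\cdot\frac{1}{N^d}\sum_{\bm\in[N]^d}\e\!\Big(\frac{1}{\tN}\sum_{i=1}^d m_i\,A_i(\xi)\Big)=c_\xi\,N^{it}\e(w(N))+o_{N\to\infty}(1),
\]
with the \emph{same} $t=\sum_j t_{f_j}$ and $w=\sum_j w_{f_j}$ for every $\xi$; summing over $\xi$ then yields the lemma with $c=\sum_\xi c_\xi$.

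For the coefficients, I would use $\tN\equiv1\bmod Q$ together with \eqref{E:StructureXi2} to write each $\xi_j\in\Xi_{\tN,\theta}$ as $\xi_j=\tfrac{p_j}{Q}\tN+\tfrac{\xi_j'}{Q}$ with $p_j\in\{0,\dots,Q-1\}$, $\xi_j'\in\Z$ bounded, and $p_j+\xi_j'\equiv0\bmod Q$; this is exactly the form required in Lemma~\ref{L:Fourier'}, which, applied with $N$ there equal to $\tN$ and noting $\widehat{f_{j,\tN}}(\xi_j)=\tfrac1{\tN}\sum_{n=1}^{\tN}f_j(n)\e(-n\xi_j/\tN)$, gives $\widehat{f_{j,\tN}}(\xi_j)=c_j\,\tN^{it_{f_j}}\e(w_{f_j}(\tN))+o_{N\to\infty}(1)$ for a constant $c_j$ independent of $N$. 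Moreover $\|Q\xi_j/\tN\|=|\xi_j'|/\tN$ once $\tN>2|\xi_j'|$, so by \eqref{eq:fourier-phi} the weight $\widehat{\phi_{\tN,\theta}}(\xi_j)$ is eventually the constant $1-|\xi_j'|/(QV)$. Multiplying the $\ell$ asymptotics, the product of the coefficients equals $C_\xi\,\tN^{it}\e(w(\tN))+o_{N\to\infty}(1)$ for a constant $C_\xi$, with $t=\sum_j t_{f_j}$ and $w=\sum_j w_{f_j}$. For the exponential sum I would factor it as $\prod_{i=1}^d\frac1N\sum_{m=1}^N\e(mA_i(\xi)/\tN)$ and write $A_i(\xi)/\tN=r_i/Q+\gamma_i/\tN$ with $r_i\in\Z$ and $\gamma_i$ a bounded real; grouping $m$ by its residue mod $Q$ separates the $Q$-periodic factor $\e(mr_i/Q)$ from $\e(m\gamma_i/\tN)$, and since $m\le N$ with $N/\tN\to\beta$ the latter sum is a Riemann sum over $[0,\beta]$, so each factor --- hence the whole exponential sum --- converges to an explicit constant $D_\xi$. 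Finally, since $\tN$ is comparable to $N$ (so $N\le\tN\le N^2$ for $N$ large) and $w$ is slowly-varying, $\e(w(\tN))=\e(w(N))+o_{N\to\infty}(1)$ and $(\tN/N)^{it}\to\beta^{-it}$; combining, the displayed quantity equals $(\beta^{-it}C_\xi D_\xi)\,N^{it}\e(w(N))+o_{N\to\infty}(1)$, as required, and $c_\xi=\beta^{-it}C_\xi D_\xi$.

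The main obstacle is the bookkeeping that reconciles the two scales in play: the Hal\'asz--Delange asymptotics for the Fourier coefficients are statements about sums over $[\tN]$ and produce the oscillating factor $\tN^{it}\e(w(\tN))$, whereas the exponential sums live over $[N]^d$. The hypothesis $\tN\equiv1\bmod Q$ is precisely what puts the frequencies of $\Xi_{\tN,\theta}$ into the shape $\tfrac{p}{Q}\tN+\tfrac{\xi'}{Q}$ with the integrality demanded by Lemma~\ref{L:Fourier'}, and the hypothesis $\lim N/\tN=\beta$ (positive and $\le\kappa\inv$) both prevents wrap-around of $L_j(\bm)$ in $\Z_{\tN}$ and allows the trade of $\tN^{it}\e(w(\tN))$ for $N^{it}\e(w(N))$ via the slowly-varying property. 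A secondary point to watch is that $t$ and $w$ are the same for every Fourier mode $\xi$, which is what lets the boundedly many contributions be summed while preserving the asserted asymptotic form.
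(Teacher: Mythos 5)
Your proposal is correct and follows essentially the same path as the paper's own proof: Fourier-expand each $f_{j,\tN,\st}$ over the spectrum $\Xi_{\tN,\theta}$, use $\tN\equiv1\bmod Q$ to parametrize the frequencies as $\frac{p}{Q}\tN+\frac{\xi'}{Q}$ with $p,\xi'$ ranging over $N$-independent finite sets, feed these into Lemma~\ref{L:Fourier'} for the Fourier coefficients, evaluate the weights $\widehat{\phi_{\tN,\theta}}$ as eventually constant, handle the residual exponential sums over $[N]^d$ by factoring over coordinates, separating a $\bmod\ Q$ periodic factor and a Riemann-sum factor on the scale $N/\tN\to\beta$, and finally trade $\tN^{it}\e(w(\tN))$ for $N^{it}\e(w(N))$ using slow variation and $\tN/N\to\beta^{-1}$. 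The bookkeeping you flag as the ``main obstacle'' is exactly where the paper spends its effort as well; nothing essential is missing.
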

\begin{proof}
By the definition of $f_{j,\tN,\st}$  we have for $j=1,\ldots,\ell$ that
$$
f_{j,\tN,\st}(n)= \sum_{\xi \in \Xi_{\tN,\theta}} \widehat{f_{j,\tN}}(\xi)\, \widehat{\phi_{\tN,\theta}}(\xi) \, \e\big(n\frac{\xi}{\tN}\big), \quad n\in
[\tN],
$$
where  $\Xi_{\tN,\theta}$ is the spectrum of $\phi_{\tN,\theta}$ (defined in  \eqref{eq:def-Xi}).

Recall that for $1\leq j\leq\ell$, the linear form $L_j$ has non-negative integer coefficients. By hypothesis, for every $\bm\in[N]^d$ we have $0\leq L_j(\bm)\leq\kappa N/2$.
On the other hand,
for $N$ large enough we have $\wt N\geq \kappa N/2$ and thus,  for  every $\bm\in[N]^d$,
we have $L_j(\bm)\in[\wt N]$. Therefore, the last formula holds for  $n=L_j(\bm)$.

Since $\tN\equiv 1  \bmod{Q}$ it follows from  \eqref{E:StructureXi1} and \eqref{E:StructureXi2}  that for $\tN>2QV$ if   $\xi\in \Xi_{\tN,\theta}$, then $\xi$ can be uniquely represented as
$$
\xi=\frac{p}{Q}\tN+\frac{\xi'}{Q}
$$
for some $p\in \{0,\ldots, Q-1\}$ and $\xi'\in \Xi'_{p,\theta}$ where for $p=0,1,\ldots, Q-1$ we have
$$
\Xi'_{p,\theta}:=
\big\{-p+jQ\colon -V< j\leq V\big\}.
$$
Hence, it suffices to show that the averages \eqref{E:Str1'} satisfy the asserted asymptotic
  when
for $j=1,\ldots, \ell$ the (finite) sequence  $(f_{j,\tN,\st}(n))_{n\in [\tN]}$ in \eqref{E:Str1'} is replaced by
the sequence
$$
\widehat{f_{j,\tN}}\big(\frac{p_j}{Q}\tN+\frac{\xi_j'}{Q}\big)\cdot \widehat{\phi_{\tN,\theta}}
\big(\frac{p_j}{Q}\tN+\frac{\xi_j'}{Q}\big)\cdot  \e\big(n(\frac{p_j}{Q}+\frac{\xi_j'}{Q\tN})\big), \quad n\in[\tN],
$$
for all possible vectors $(p_1,\ldots, p_\ell)$, $(\xi_1',\ldots, \xi_\ell')$, where $p_j\in \{0,\ldots, Q-1\}$ and  $\xi_j'\in \Xi'_{p_j,\theta}$  for  $j=1,\ldots, \ell$.

For $j=1,\ldots, \ell,$ by Lemma~\ref{L:Fourier'}  we have  that there exists $c_j=c_{f,p_j,\theta, \xi_j'}\in \C$ such that
$$
 \widehat{f_{j,\tN}}\big(\frac{p_j}{Q}\tN+\frac{\xi_j'}{Q}\big)=c_j \tN^{it_{f_j}}\e(w_{f_j}(\tN))+o_{N\to\infty}(1)=c'_j N^{it_{f_j}}\e(w_{f_j}(N))+o_{N\to\infty}(1)
$$
where $c'_j:=\beta^{-it_{f_j}}c_j$ and the last identity follows because  $\lim_{N\to \infty}  \frac{N}{\tN}=\beta$ and $w_{f_j}$ is a slowly-varying
sequence.
Hence, there exists $c=c_{f,p_1,\ldots, p_\ell,\beta,\theta, \xi_1',\ldots, \xi_\ell'}\in \C$ such that
$$
 \prod_{j=1}^\ell\widehat{f_{j,\tN}}\big(\frac{p_j}{Q}\tN+\frac{\xi_j'}{Q}\big)=c N^{it}\e(w(N))+o_{N\to\infty}(1)
$$
where $t:=\sum_{j=1}^\ell t_{f_j}$ and $w:=\sum_{j=1}^\ell w_{f_j}$.

Furthermore,   it follows from  \eqref{eq:fourier-phi}  that
$$
\widehat{\phi_{\tN,\theta}}
\big(\frac{p_j}{Q}\tN+\frac{\xi_j'}{Q}\big)=1-\frac{\xi_j'}{QV}, \quad \text{whenever }\  \tN\geq 2QV.
$$

Finally, we deal with the terms  $\e\big(n(\frac{p_j}{Q}+\frac{\xi_j'}{Q\tN})\big)$.
After  substituting $L_1(\bm),\dots,L_\ell(\bm)$ for $n$, and writing $\bm=(m_1,\ldots,m_d)$,  they give  rise to averages of the form
$$
\frac{1}{N^d}\sum_{1\leq m_1,\ldots,m_d\leq N} \e\Big(\sum_{j=1}^d m_j\big(\frac{k_j}{Q}+\frac{l_j}{Q\tN}\big)\Big)
$$
for some $k_j,l_j \in \Z$, $j=1,\ldots,d,$ which depend only on the coefficients of the linear forms and the set $\bigcup_{p=0}^{Q-1}\Xi'_{p,\theta}$. If
for some $j\in \{1,\ldots, \ell\}$ the integer $k_j$ is not divisible by $Q$,  then this average converges to $0$. Otherwise,  it converges to $\prod_{j=1}^d\int_0^1\e\big(l_j\beta s/Q)\, ds$.

 Combining the above we get the asserted asymptotic \eqref{E:Str1'}.
\end{proof}

  Next we state a variant of some  uniformity estimates that appear in~\cite[Proposition~7.1]{GT10}. They can be obtained using an argument similar to the one in~\cite[Lemma~9.6]{FH14}; we present it for completeness.

\begin{lemma}[Uniformity estimates]
\label{L:seminorms}
Let  $d,\ell\in \N$, with $\ell \geq 3$,  and $L_j\colon \N^d\to \N$, $j=1,\ldots, \ell$,  be  linear forms such that   the  forms $L_1,L_j$ are independent for $j=2,\ldots,  \ell$.
Let $\kappa$ be twice the sum of  the coefficients of the forms $L_j$
and $K\in \N$ with $K>\kappa$.
 For $N\in \N$ large enough, let  $\tN$ be a prime with  $\kappa N\leq \tN\leq KN$  and $a_{N,1}\ldots,a_{N,\ell}\colon [\tN]\to \C$ be  arbitrary sequences  bounded by $1$.  Then there exist  positive constants
 $c=c(d,\ell)$ and $C=C(d,\ell,K)$, such that
\begin{equation}\label{E:uniform}
\Big|\frac{1}{N^d}\sum_{\bm\in [N]^d}
\prod_{j=1}^\ell a_{N,j}(L_j(\bm))\Big|\leq C\, \norm{a_{N,1}}_{U^{\ell-1}(\Z_\tN)}^c+o_N(1).
\end{equation}
\end{lemma}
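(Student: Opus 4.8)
The plan is to pass from the box $[N]^d$ to the cyclic group $\Z_\tN$, where Gowers-norm machinery (a generalized von Neumann inequality) applies, and then control the error made in this passage. First I would fix a large prime $\tN$ with $\kappa N\le \tN\le KN$; since $\kappa$ is twice the sum of the coefficients of the $L_j$, for every $\bm\in[N]^d$ and every $j$ we have $0\le L_j(\bm)\le \kappa N/2\le \tN/2<\tN$, so each $L_j(\bm)$ lives in $[\tN]$ and we may view the $a_{N,j}$ as functions on $\Z_\tN$ without wrap-around ambiguity. The point of choosing $\tN$ prime is that in $\Z_\tN$ every nonzero linear form is invertible, which is exactly what makes the generalized von Neumann estimate work; the independence of $L_1$ from each $L_j$, $j\ge 2$, guarantees that the relevant $\ell$-tuple of forms has the ``true complexity'' at most $\ell-2$ in the $L_1$ coordinate, so the average is controlled by $\norm{a_{N,1}}_{U^{\ell-1}(\Z_\tN)}$ (one more than the complexity).

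The main step is the following. Write $\one_{[N]^d}$ as a function on $\Z_\tN^d$ and expand the box indicator, or more simply bound the box average by the corresponding average over all of $\Z_\tN^d$ of the product $\one_{[N]}(m_1)\cdots\one_{[N]}(m_d)\prod_j a_{N,j}(L_j(\bm))$, up to the normalization factor $(\tN/N)^d\le K^d$. Thus it suffices to estimate
$$
\Big|\frac{1}{\tN^d}\sum_{\bm\in\Z_\tN^d}\ b_{N,1}(m_1)\cdots b_{N,d}(m_d)\prod_{j=1}^\ell a_{N,j}(L_j(\bm))\Big|,
$$
where $b_{N,i}=\one_{[N]}$ are also bounded by $1$. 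I would now absorb, say, $b_{N,1}$ into a new weight attached to the $L_1$-direction (after a change of variables making $L_1$ a coordinate, possible since $L_1\ne 0$ in $\Z_\tN$) and apply the generalized von Neumann inequality for linear forms of complexity $\le \ell-2$: repeated Cauchy--Schwarz in the $\ell-1$ ``free'' directions other than the one carrying $a_{N,1}$, exactly as in \cite[Proposition~7.1]{GT10} and \cite[Lemma~9.6]{FH14}. Each Cauchy--Schwarz step doubles the number of summation variables and uses the boundedness of the other factors; the forms $L_2,\dots,L_\ell$ being individually independent of $L_1$ ensures that after the differencing the $a_{N,1}$ factor appears in a Gowers-box configuration of dimension $\ell-1$. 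The outcome is a bound of the shape $\norm{a_{N,1}}_{U^{\ell-1}(\Z_\tN)}^{c}$ for some $c=c(d,\ell)$ (a negative power of $2$, from the iterated Cauchy--Schwarz), times a constant $C$ depending on $d,\ell$ and on $K$ through the factor $(\tN/N)^d\le K^d$ and through the number of auxiliary variables introduced.

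The remaining bookkeeping is routine: the $o_N(1)$ term in \eqref{E:uniform} comes from the discrepancy between the box average and the $\Z_\tN^d$-average, i.e.\ from the extra mass $(\tN/N)^d-1$ and from boundary effects, all of which are $O(1)$ and in fact absorbed into $C$ rather than genuinely needed — one can even take the $o_N(1)$ to be $0$ if one is slightly careful, but keeping it costs nothing and simplifies the reduction. The hard part conceptually is the generalized von Neumann step, i.e.\ verifying that the particular system $(m_1,\dots,m_d)\mapsto(L_1(\bm),\dots,L_\ell(\bm))$, weighted by coordinate indicators, has $L_1$-complexity at most $\ell-2$ so that the $U^{\ell-1}$ norm (and not a higher one) suffices; this is precisely where pairwise independence of $L_1$ with the others enters, and where I would follow the Cauchy--Schwarz scheme of \cite[Lemma~9.6]{FH14} verbatim, the only new feature being the presence of the harmless bounded weights $b_{N,i}$ in the non-$L_1$ coordinates, which are carried along unchanged through every Cauchy--Schwarz application.
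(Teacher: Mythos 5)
Your overall plan matches the paper's: pass to $\Z_{\tN}$, isolate $a_{N,1}$ by iterated Cauchy--Schwarz, and obtain a $U^{\ell-1}(\Z_{\tN})$ bound. But the handling of the cutoff $\one_{[N]^d}$ contains a genuine gap, and it is precisely the step the paper treats with care. You write the box sum as a $\Z_{\tN}^d$-average of $\one_{[N]}(m_1)\cdots\one_{[N]}(m_d)\prod_j a_{N,j}(L_j(\bm))$ and then assert that the indicators $b_{N,i}=\one_{[N]}$ are ``harmless bounded weights \dots carried along unchanged through every Cauchy--Schwarz application.'' That is not justified, and it is the crux of the problem. In the iterated Cauchy--Schwarz every factor that depends on a shifted variable gets multiplicatively differenced, the $b_{N,i}$ included; what comes out at the end is not $\norm{a_{N,1}}_{U^{\ell-1}(\Z_{\tN})}$ but a weighted Gowers-box expression in which shifted products of $\one_{[N]}$ sit inside the inner average, and such weighted expressions are not controlled by the unweighted Gowers norm of $a_{N,1}$ for generic bounded weights. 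The paper removes the cutoff \emph{before} doing any Cauchy--Schwarz, by approximating $\one_{[N]^d}$ by a product of smoothed cutoffs in $\Z_{\tN}$ (following \cite[Lemma~A.1]{FH14}, or \cite[Proposition~7.1]{GT10}), which reduces matters to
$\max_{\xi\in\Z_{\tN}^d}\bigl|\frac{1}{\tN^d}\sum_{\bm\in\Z_{\tN}^d}\e(\bm\cdot\xi/\tN)\prod_j\wt a_{N,j}(L_j(\bm))\bigr|^{1/(d+1)}$. The exponential weights $\e(\bm\cdot\xi/\tN)$, unlike a generic $\one_{[N]}(m_i)$, are characters and cancel identically within the first two Cauchy--Schwarz steps; this is exactly why the proof remarks that ``the exponential terms are going to vanish in the process because $\ell\geq 3$.'' That Fourier step is what is missing from your sketch, and without it the claimed bound by $\norm{a_{N,1}}_{U^{\ell-1}}^c$ does not follow.

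A secondary symptom of the same gap is your claim that $c$ is ``a negative power of $2$, from the iterated Cauchy--Schwarz.'' In the paper the Cauchy--Schwarz iteration in fact yields exponent $1$ (equation \eqref{E:uniform2}); the nontrivial exponent comes from the cutoff removal (a $1/(d+1)$ power, which is where the $d$-dependence in $c=c(d,\ell)$ enters) and from the interval-restriction bound \eqref{E:uniform3} relating $\norm{\wt a_{N,1}}_{U^{\ell-1}(\Z_{\tN})}$ to $\norm{a_{N,1}}_{U^{\ell-1}(\Z_{\tN})}$ (a $1/(2^{\ell-1}+1)$ power). Your proposed $c$ has no $d$-dependence, which is inconsistent with the statement of the lemma.
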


\begin{proof}
After putting together terms evaluated at linear forms $L_j$, $j=2,\ldots, \ell$, that are pairwise dependent we can assume that the linear forms are pairwise independent.

 For $j=1,\ldots, \ell$ and $N\in \N$, let $\wt a_{N,j}\colon \Z \to \C$  be periodic of period $\wt N$ and
   equal to  $a_{N,j}$ on the interval
 $\bigl[-\lceil \wt N/2\rceil,   \lfloor \wt N/2\rfloor\bigr)$.
For $\bm\in [N]^d$, since $|L_j(\bm)|<\wt N/2$,  we have $a_{N,j}(L_j(\bm))=\wt a_{N,j}(L_j(\bm))$. Hence,
\begin{equation}\label{E:uniform1}
 \frac{1}{N^d}\sum_{\bm\in [N]^d}  \prod_{j=1}^\ell a_{N,j}(L_j(\bm))
 =\big(\frac{\tilde{N}}{N}\big)^d\cdot
 \frac{1}{\tN^d}\sum_{\bm\in \Z_{\tN}^d}\one_{[N]^d}(\bm)\, \prod_{j=1}^\ell \wt a_{N,j}(L_j(\bm)).
\end{equation}
Henceforth, we work with the right hand side and assume that the linear forms $L_j$ and the functions $\wt a_{N,j}$  are defined on $\Z_\tN$.

Our first goal is to remove the cut-off ${\bf 1}_{[N]^d}(\bm)$.
 To do this, one can follow the method in~\cite[Proposition~7.1]{GT10},
 or what turns out to be  somewhat  simpler,  follow the argument in~\cite[Lemma~A.1]{FH14};
 after approximating the cut-off by a product of smoothed out cut-offs in $\Z_\tN$ we deduce  that for some $C'=C'(d,K)$ the modulus of the right hand side in \eqref{E:uniform1}
 is bounded by
 \begin{equation}\label{E:uniform1'}
C'\cdot \max_{{\bf \xi}\in\Z_{\tN}^d}
\Bigl|\frac{1}{\tN^d}\sum_{\bm\in \Z_{\tN}^d} \e\big(\frac{\bm\cdot{\bf \xi}}{\wt  N}\big) \prod_{j=1}^\ell\wt a_{N,j}(L_j(\bm)) \Bigr|^{\frac{1}{d+1}}+o_{N\to \infty}(1).
\end{equation}

 Next we  estimate the averages in \eqref{E:uniform1'}.
The pairwise independence of the linear forms $L_j$
implies that for $\tN \geq \kappa^2$ the forms $L_j$ on $\Z_\tN^d$ are pairwise independent over $\Z_\tN$.
Using this and an    iteration of the Cauchy-Schwarz inequality (see for example the argument in~\cite[Proposition~7.1]{GT10}) we get
\begin{equation}\label{E:uniform2}
 \max_{{\bf \xi}\in\Z_{\tN}^d}
\Bigl|\frac{1}{\tN^d}\sum_{\bm\in \Z_{\tN}^d} \e\big(\frac{\bm\cdot{\bf \xi}}{\wt  N}\big) \prod_{j=1}^\ell\wt a_{N,j}(L_j(\bm)) \Bigr|\leq
\norm{\wt a_{N,1}}_{U^{\ell-1}(\Z_{\tN})}.
 \end{equation}
 Note that the exponential terms are going to vanish in the process because $\ell\geq 3$.

Finally, we write
$$
\Z_\tN=I_N\cup J_N\cup\{0\},\ \text{ where }\ I_N:= [1,\lfloor \wt N/2\rfloor)\ \text{ and }\
J_N:= [\lfloor \tN/2\rfloor, \tN).
$$
Note that  $\norm{\one_{\{0\}}\cdot \wt a_{N,1}}_{U^{\ell-1}(\Z_\tN)}\to 0$ as $N\to+\infty$.
Furthermore, by the proof  of \cite[Lemma~A.1]{FH14} we have
\begin{equation}
\label{E:uniform3}
\max_{I\subset \Z_\tN}\norm{\one_{I_N}\cdot a_{N,1}}_{U^{\ell-1}(\Z_\tN)}\leq
3 \norm{a_{N,1}}_{U^{\ell-1}(\Z_\tN)}^{1/(2^{\ell-1}+1)}+o_N(1)
\end{equation}
where the maximum is taken over all subintervals $I$ of $\Z_\tN$.
Since $\wt a_{N,1}$ and $a_{N,1}$ coincide on $I_N$, we have $\norm{\one_{I_N}\cdot \wt a_{N,1}}_{U^{\ell-1}(\Z_\tN)}=\norm{\one_{I_N} \cdot a_{N,1}}_{U^{\ell-1}(\Z_\tN)}$.
For $n\in J_N$ we have $\wt a_{N,1}(n)=\wt a_{N,1}(n-\tN)=a_{N,1}(n-\tN)=a_{N,1}(\tN-n)$. The map $n\mapsto \tN-n$ maps the interval $J_N$ onto the interval $J_N':=[1,\lceil \wt N/2\rceil]$ and thus $\norm{\one_{J_N}\cdot \wt a_{N,1}}_{U^{\ell-1}(\Z_\tN)}=
 \norm{\one_{J_N'}\cdot  a_{N,1}}_{U^{\ell-1}(\Z_\tN)}$.  The asserted estimate now follows by combining \eqref{E:uniform1}-\eqref{E:uniform3}.
\end{proof}

\subsection{Proof of the main results}
\label{subsec:proof-of-main}
We proceed to prove the main results of this article.
\begin{proof}[Proof of Theorem~\ref{T:Main}]
 Without loss of generality we can  assume that $\ell\geq 3$.
   After putting together terms evaluated at linear forms that are pairwise dependent we can assume that the linear forms are pairwise independent.

Let  $\varepsilon>0$.
We let $\kappa$ be twice  the sum of  the coefficients of the forms $L_j$ and
$$
 \delta:=\big(\frac{\varepsilon}{2C\ell}\big)^{1/c}
$$
where $c,C$ are as in Lemma~\ref{L:seminorms}.
We use the structural result of  Theorem~\ref{T:Structure} for this $\delta$ in place of $\varepsilon$ and for $\ell-1$ in place of $s$.
We  get that  there exists $\theta=\theta(\ve,\ell)>0$ such that for all large enough $N\in \N$, if  $\tN$ denotes  the smallest prime such that $\tN>\kappa N$ and
$\tN\equiv 1  \bmod{Q}$ ($Q$ was introduced in Section~\ref{S:2} and  depends only on $\theta$),
then for $j=1,\ldots, \ell$, we have the  decompositions
\begin{equation}\label{E:decomposition}
  f_j(n)=f_{j,\tN,\st}(n)+ f_{j,\tN,\un}(n), \qquad n\in [\tN],
\end{equation}
where   $f_{j,\tN,\st}=f_{j,\tN}*\phi_{\tN,\theta}$ ($\phi_{\tN,\theta}$ is defined by \eqref{eq:fourier-phi}) and
\begin{equation}\label{E:bound}
\norm{f_{j,\tN,\un}}_{U^{\ell-1}(\Z_\tN)}\leq\delta.
\end{equation}
The prime number theorem on arithmetic progressions implies  that
$$
\lim_{N\to \infty}\frac{N}{\tN}=\frac{1}{\kappa}.
$$
We remark that the hypothesis of Lemma~\ref{L:Structured'} are satisfied. As in the proof of this lemma, for $N$ sufficiently large we have $L_j(\bm)\in[\wt N]$ for $j=1,\dots,\ell$ and every   $\bm\in [N]^d$ and thus  equation \eqref{E:decomposition} applies for  $L_j(\bm)$ in place of $n$.

For $N\in \N$, given
  $a_{N, 1},\ldots, a_{N,\ell}\colon [\tN]\to \C$
 we define
$$
A_N(a_{N,1},\ldots, a_{N,\ell}):=\frac{1}{N^d}\sum_{\bm\in [N]^d} \prod_{j=1}^\ell a_{N,j}(L_j(\bm)).
$$
Since for  $j=1,\ldots, \ell,$ the functions $f_{j,\tN,\un}\colon [\tN] \to \C$  are   bounded by $2$, it follows from Lemma~\ref{L:seminorms} and \eqref{E:bound} that
$$
|A_N(a_{N, 1},\ldots, a_{N,\ell})|\leq \ve/\ell+o_N(1)
$$
 if $a_{N,j}=f_{j,\tN,\un}$ for some $j=1,\ldots, \ell$ and all other sequences $a_{N,j}$ are  bounded by $1$.
Using this property, equation \eqref{E:decomposition},  the fact that $f_j, f_{j,\tN,\st}$ are bounded by $1$ for $j=1,\ldots, \ell$,   and telescoping, we deduce that
$$
\limsup_{N\to\infty}|A_{N}(f_1,\ldots, f_\ell)-A_{N}(f_{1,\tN,\st},\ldots, f_{\ell, \tN,\st})|\leq \varepsilon.
$$
Furthermore,   by Lemma~\ref{L:Structured'} we have that the   limit
$$
\lim_{N\to \infty} N^{-it}\e(-w(N))  A_{N}(f_{1,\tN,\st},\ldots, f_{\ell, \tN,\st})
$$
exists for  $t:=\sum_{i=1}^\ell t_{f_i}$ and $w:=\sum_{i=1}^\ell w_{f_i}$.
It follows that
\begin{multline*}
 \limsup_{N\to \infty}\Re\big(N^{-it}\e(-w(N))A_N(f_1,\ldots, f_\ell)\big)\leq \\ \liminf_{N\to \infty}\Re\big(N^{-it}\e(-w(N))A_N(f_1,\ldots, f_\ell)\big)+ 2\varepsilon
\end{multline*}
 and a similar  estimate holds for the imaginary parts.
 Since $\varepsilon$ is arbitrary and all expressions are bounded, the limit $\lim_{N\to \infty} \big(N^{-it}\e(-w(N))A_N(f_1,\ldots, f_\ell)\big)$ exists. This proves the asserted asymptotic.

 Finally, we prove the last claim of Theorem~\ref{T:Main}. If the linear forms are pairwise independent and one of the multiplicative functions is non-pretentious,
then, using the terminology of \cite{FH14}, it is  aperiodic (see \cite[Proposition~2.3]{FH14}). Hence,   by \cite[Theorem~9.7]{FH14} the averages \eqref{E:multi1} converge to $0$ as $N\to \infty$.
\end{proof}

\begin{proof}[Proof of Theorem~\ref{T:1}]
After putting together terms evaluated at linear forms that are pairwise dependent we can assume that the linear forms are pairwise independent. Since the multiplicative functions take real values, it follows by the
definition of $t_f$ and $w_f$ in Theorem~\ref{T:HalaszAP} (see  the remark following this  theorem)
 that   $t_{f_j}=0$ and $w_{f_j}=0$ for $j=1,\ldots, \ell$.
The result  is now immediate  from Theorem~\ref{T:Main}.
\end{proof}
\begin{proof}[Proof of Theorem~\ref{T:2}]
Since $|N^{it}\e(w(N))|=1$ for every $N\in \N$  the result follows  immediately  from Theorem~\ref{T:Main}.
\end{proof}

\begin{proof}[Proof of Theorem~\ref{T:3}]
  It follows from the definition of $t_f$ and $w_f$ in Theorem~\ref{T:HalaszAP} that if $f\in \CM_\C$, then $t_{\bar{f}}=-t_f$ and   $w_{\bar{f}}=-w_f$.  The result then follows  immediately  from Theorem~\ref{T:Main}.
\end{proof}

\begin{proof}[Proof of Theorem~\ref{T:4}]
In what follows  we denote by $TF$ the composition $F\circ T$.

The  polarization identity
\begin{multline*}
4\langle T_nF, T_mG \rangle=\langle T_n(F+G), T_m(F+G) \rangle
-\langle T_n(F-G), T_m(F-G) \rangle+\\
i\langle T_n(F+iG), T_m(F+iG) \rangle-i \langle T_n(F-iG), T_m(F-iG) \rangle
\end{multline*}
 allows us to  express a
sequence of the form $\int T_nF\cdot T_mG \ d\mu$, $F,G\in L^2(\mu)$,  as  a linear combination of
sequences of the form $\int T_nH\cdot T_m\bar{H} \ d\mu$ with $H\in L^2(\mu)$. It thus suffices to check the asserted
convergence of  ergodic averages when $G=\bar{F}$.

 To this end, note  that  the action $(T_n)_{n\in \N}$ on  $(X,\CX,\mu)$ extends to a measure preserving action $(T_r)_{r\in\Q^+}$ of the multiplicative group $\Q^+$
by defining
$$
T_{a/b}:=T_aT_b\inv\ \ \text{ for all }\ \  a,b\in\N.
$$
Let $\rho\colon\Q^+\to\C$ be defined by
$$
\rho(r):=\int T_rF\cdot \overline{F}\, d\mu, \quad r\in \Q^+.
$$
 Then   $\rho$ is positive definite on $(\Q^+,\cdot$), that is, for every $n\in\N$, all $r_1,\dots,r_n\in\Q^+$, and all $\lambda_1,\dots,\lambda_n\in\C$, we have
$$
\sum_{i,j=1}^n\lambda_i\overline{\lambda_j}\,\rho(r_i\,r_j\inv)\geq 0.
$$
By Bochner's theorem, there exists a unique  positive finite measure  $\nu$ on the
 dual group of the
 group $\Q^+$ with multiplication,  with a  Fourier-Stieltjes transform $\wh \nu$ equal to the function $\rho$.
The dual group of the
multiplicative group $\Q^+$ is the space
$\CM_{\T}^c$  of all completely multiplicative functions of modulus $1$,  the duality being given by
$$
f(m/n)=f(m)\overline f(n) \ \ \text{ for every }\ \ f\in \CM_{\T}^c
\ \text{ and every }\ m,n\in\N.
$$
The group $\CM_{\T}^c$ is endowed with the dual topology, which is  simply the compact topology of pointwise convergence.

 It follows from the previous discussion that for every function $F\in L^2(\mu)$ there
exists a positive finite
  measure $\nu$ on the compact Abelian group $\CM_{\T}^c$,  such that, for all $m,n\in\N$,
$$
\int T_mF\cdot \overline T_nF\,d\mu
=\int T_{m/n}F\cdot \overline F\,d\mu=\int_{\CM_{\T}^c}f(m/n)\,d\nu(f)
=\int_{\CM_{\T}^c}f(m)\, \overline f(n)\,d\nu(f).
$$
Hence, in order to show convergence  of the averages \eqref{E:ergodic} it suffices to prove that the following averages converge
$$
\frac{1}{N^d}\sum_{\bm\in [N]^d} \int_{\CM_{\T}^c}f(\prod_{j=1}^\ell L_j(\bm))\cdot  \overline f(\prod_{j=1}^\ell L'_j(\bm))\,d\nu(f)
 $$
 as $N\to \infty$.
Since $f$ is completely multiplicative  this follows from  Theorem~\ref{T:3} and the bounded convergence theorem.
\end{proof}

\end{document}